\font\tenopen=msbm10
\font\sevenopen=msbm7
\font\fiveopen=msbm5
\def\open{\fam\openfam\tenopen}
\font\title=cmbx10 scaled\magstep1
\def\N{{\open \mathbb{N}}}
\def\Z{{\open \mathbb{Z}}}
\def\R{{\open \mathbb{R}}}
\def\p{{\open \mathbb{P}}}
\def\E{{\open \mathbb{E}}}
\def\B{I\negthinspace\negthinspace B}
\theoremstyle{remark} \theoremstyle{lemma} \theoremstyle{definition}
\theoremstyle{corol} \theoremstyle{proposition}
\theoremstyle{condition} \theoremstyle{conjecture}
\newtheorem{theorem}{\bf{Theorem}}
\newtheorem{lemma}{\bf{Lemma}}
\newtheorem{corol}{\bf{Corollary}}
\newtheorem{condition}{\bf{Condition}}
\begin{document}

\centerline{{\bf{\large Block Sampling under Strong Dependence}}
\footnote{{\em JEL Codes:} C22\\
\phantom{Ke}{\em Keywords:} Asymptotic normality; Covariance; Linear processes; Long-range dependence; Rosenblatt distribution; Hermite processes}}

\centerline{\sc By Ting Zhang$^\dag$, Hwai-Chung Ho$^\ddag$, Martin Wendler$^\dag$ and Wei Biao Wu$^\dag$}
\centerline{\em $^\dag$The University of Chicago and $^\ddag$Institute of Statistical Science, Academia Sinica, Taipei}
\centerline{\today}

\begin{abstract}
The paper considers the block sampling method for long-range dependent processes. Our theory generalizes earlier ones by Hall, Jing and Lahiri (1998) on functionals of Gaussian processes and Nordman and Lahiri (2005) on linear processes. In particular, we allow nonlinear transforms of linear processes. Under suitable conditions on physical dependence measures, we prove the validity of the block sampling method. The problem of estimating the self-similar index is also studied.
\end{abstract}

\section{Introduction}
Long memory (strongly dependent, or long-range dependent)
processes have received considerable attention in areas including
econometrics, finance, geology and telecommunication among others.
Let $X_i$, $i \in \Z$, be a stationary linear process of the form
\begin{equation}\label{eqn:Xidef}
X_i = \sum_{j=0}^\infty a_j \varepsilon_{i-j},
\end{equation}
where $\varepsilon_i$, $i \in \Z$, are independent and identically
distributed (iid) random variables with zero mean, finite variance
and $(a_j)_{j=0}^\infty$ are square summable real coefficients. If
$a_i \to 0$ very slowly, say $a_i \sim i^{-\beta}$, $1/2 < \beta < 1$,
then there exists a constant $c_\beta > 0$ such that the covariances
$\gamma_i = \E(X_0 X_i) = \E(\varepsilon_0^2) \sum_{j=0}^\infty a_j a_{i+j}
\sim c_\beta \E(\varepsilon_0^2) i^{1-2\beta}$ are not summable, thus suggesting strong dependence. An
important example is the fractionally integrated autoregressive
moving average (FARIMA) processes (Granger and Joyeux, 1980 and Hosking, 1981). Let $K$ be a measurable function such that $\E [K^2(X_i)] < \infty$, and $\mu = \E K(X_i)$. This paper considers the asymptotic sampling distribution of
\begin{equation*}
\hat \mu_n = {1\over n} \sum_{i=1}^n K(X_i) = { {S_n} \over n } + \mu,\ \mathrm{where}\ S_n = \sum_{i=1}^n [K(X_i) - \mu].
\end{equation*}
In the inference of the mean $\mu$, such as the construction of
confidence intervals and hypothesis testing, it is
necessary to develop a large sample theory for the partial sum
process $S_n$. The latter problem has a substantial history. Here
we shall only give a very brief account. Davydov (1970) considered
the special case $K(x) = x$ and Taqqu (1975) and Dobrushin and
Major (1979) dealt with another special case in which $K$ can be a
nonlinear transform while $(X_i)$ is a Gaussian process. Quadratic
forms are considered in Chung (2002). See Surgailis (1982), Avram
and Taqqu (1987) and Dittmann and Granger (2002) for other
contributions and Wu (2006) for further references. For general
linear processes with nonlinear transforms, under some regularity
conditions on $K$, if $X_i$ is a short memory (or short-range
dependent) process with $\sum_{j=0}^\infty |a_j| < \infty$, then
$S_n / \sqrt n$ satisfies a central limit theorem with a Gaussian
limiting distribution; if $X_i$ is long-memory (or long-range
dependent), then with proper normalization, $S_n$ may have either a
non-Gaussian or Gaussian limiting distribution and the normalizing
constant may no longer be $\sqrt n$ (Ho and Hsing, 1997 and Wu, 2006). In many situations, the non-Gaussian limiting distribution
can be expressed as a multiple Wiener-It\^{o} integral (MWI); see equation (\ref{eqn:MWI}).

The distribution function of a non-Gaussian WMI does not have a close form. This brings considerable
inconveniences in the related statistical inference. As a useful
alternative, we can resort to re-sampling techniques to estimate
the sampling distribution of $S_n$. K\"{u}nsch (1989) proved the
validity of the moving block bootstrap method for weakly dependent
stationary processes. However, Lahiri (1993) showed that, for
Gaussian subordinated long-memory processes, the block
bootstrapped sample means are always asymptotically Gaussian; thus
it fails to recover the non-Gaussian limiting distribution of the
multiple Wiener-It\^{o} integrals. On the other hand, Hall, Horowitz
and Jing (1995) proposed a sampling windows method. Hall, Jing and
Lahiri (1998) showed that, for the special class of processes of
nonlinear transforms of Gaussian processes, the latter method is
valid in the sense that the empirical distribution functions of
the consecutive block sums converge to the limiting distribution
of $S_n$ with a proper normalization. Nordman and Lahiri (2005)
proved that the same method works for linear processes, an
entirely different special class of stationary processes. However,
for linear processes, the limiting distribution is always
Gaussian. It has been an open problem whether a limit theory can
be established for a more general class of long-memory processes.

Here we shall provide an affirmative answer to the above question
by allowing functionals of linear processes, a more general class
of stationary processes which include linear processes and
nonlinear transforms of Gaussian processes as special cases.
Specifically, given a realization $Y_i = K(X_i)$, $1 \leq i \leq n$, with both $K$ and $X_i$ being possibly unknown or unobserved, we
consider consistent estimation of the sampling distribution of
$S_n / n$. To this end, we shall implement the concept of
physical dependence measures (Wu, 2005) which quantify the dependence of a random
process by measuring how outputs depend on inputs. The rest of the paper is organized as follows. Section
\ref{sec:mainsec} presents the main results and it deals with the
asymptotic consistency of the empirical distribution functions of
the normalized consecutive block sums. It is interesting to
observe that the same sampling windows method works for both
Gaussian and non-Gaussian limiting distributions. A simulation
study is provided in Section \ref{sec:simu}, and some proofs are
deferred to the Appendix.

\section{Main Results}\label{sec:mainsec}
In Section \ref{sec:asym}, we briefly review the asymptotic theory of $S_n$ in Ho and Hsing (1997) and Wu (2006). The block sampling method of Hall, Horowitz and Jing (1995) is described in Section \ref{subsec:blocksampling}. With physical dependence measures, Section \ref{sec:cesd} presents a consistency result for empirical sampling distributions. In Section \ref{subsec:sltilde}, we obtain a convergence rate for a variance estimate of $s_l^2 = \|S_l\|^2$. A consistent estimate of $H$, the self-similar parameter of the limiting process, is proposed in Section \ref{subsec:Hhat}.

For two positive sequences $(a_n)$ and $(b_n)$, write $a_n \sim
b_n$ if $a_n / b_n \to 1$ and $a_n \asymp b_n$ if there exists a
constant $C > 0$ such that $a_n / C \le b_n \le C a_n$ holds for
all large $n$. Let $\mathcal C_A$ (resp. $\mathcal C^p_A$) denote the collection
of continuous functions (resp. functions having $p$-th order continuous derivatives) on $A \subseteq \mathbb R$. Denote by ``$\Rightarrow$" the weak convergence; see Billingsley (1968) for a detailed account for the weak convergence theory on $\mathcal C_{[0,1]}$. For a random variable $Z$,
we write $Z \in {\cal L}^\nu$, $\nu > 0$, if $\| Z \|_\nu = (\E
|Z|^\nu )^{1/\nu} < \infty$, and write $\| Z \| = \| Z \|_2$. For
integers $i \le j$ define ${\cal F}_i^j = (\varepsilon_i,
\varepsilon_{i+1}, \ldots, \varepsilon_j)$. Write ${\cal F
}_i^\infty = (\varepsilon_i, \varepsilon_{i+1}, \ldots)$ and
${\cal F}_{-\infty}^j= (\ldots, \varepsilon_{j-1}, \varepsilon_j)$. Define the projection operator ${\cal P}_j$, $j \in \mathbb Z$, by
\begin{equation*}
{\cal P}_j \cdot = \E( \cdot | {\cal F}_{-\infty}^j) - \E( \cdot | {\cal F}_{-\infty}^{j-1}).
\end{equation*}
Then ${\cal P}_j \cdot$, $j \in \mathbb Z$, yield martingale differences.

\subsection{Asymptotic distributions}\label{sec:asym}
To study the asymptotic distribution of $S_n$
under strong dependence, we shall introduce the concept of power
rank (Ho and Hsing, 1997). Based on $K$ and $X_n$, let $X_{n, i} =
\sum_{j=n-i}^\infty a_j \varepsilon_{n-j} = \E(X_n | {\cal F
}_{-\infty}^i)$ be the tail process and define functions
\begin{equation*}
K_\infty(x) = \E K(x + X_n) \mbox{ and } K_n(x) = \E K(x + X_n - X_{n, 0}).
\end{equation*}
Note that $X_n-X_{n,0} = \sum_{j=0}^{n-1} a_j \varepsilon_{n-j}$
is independent of $X_{n,0}$. Denote by $\kappa_r =
K^{(r)}_\infty(0)$, the $r$-th derivative, if it exists. If $p \in
\N$ is such that $\kappa_p \not= 0$ and $\kappa_r =0$ for all
$r=1, \ldots, p-1$, then we say that $K$ has power rank $p$ with
respect to the distribution of $X_i$. The limiting distribution of
$S_n$ can be Gaussian or non-Gaussian. The non-Gaussian limiting
distribution here is expressed as MWIs. To define the latter, let the simplex ${\cal S}_t = \{(u_1, \ldots, u_r) \in \R^r: \, -\infty < u_1 < \ldots <u_r <t\}$ and $\{\B(u), \, u \in \R\}$ be a standard two-sided Brownian motion. For $1/ 2 < \beta < 1/2 + 1/({2r})$, define the {\it Hermite process} (Surgailis, 1982 and Avram and Taqqu, 1987) as the MWI
\begin{equation}\label{eqn:MWI}
Z_{r,\beta}(t) = \int_{{\cal S}_t} \int_0^t \prod_{i=1}^r g_\beta(v-u_i) d v\, d \B(u_1) \ldots d \B(u_r),
\end{equation}
where $g_\beta(x) = x^{-\beta}$ if $x > 0$ and $g_\beta(x) = 0$ if
$x \le 0$. It is non-Gaussian if $r \ge 2$. Note that $Z_{1,
\beta}(t)$ is the fractional Brownian motion with Hurst index $H =
3/2 - \beta$.

Let $\ell(n)$ be a slowly varying function, namely $\lim_{n \to
\infty} \ell(u n) /\ell(n) = 1$ for all $u > 0$ (Bingham, Goldie
and Teugels, 1987). Assume $a_0 \not= 0$ and $a_i$ has the form
\begin{equation}\label{eqn:ai}
a_i = i^{-\beta} \ell(i), \quad i\ge 1, \mbox{ where }1/2 < \beta < 1.
\end{equation}
Under (\ref{eqn:ai}), we say that $(a_i)$ is regularly varying with index $\beta$. Let $a_i = 0$ if $i < 0$, we need the following regularity condition on $K$ and the process
$(X_i)$.

\begin{condition}
\label{cond:2} For a function $f$ and $\lambda > 0$, write $f(x;
\lambda) = \sup_{|u| \le \lambda} |f(x+u)|$. Assume $\varepsilon_1
\in {\cal L}^{2 \nu}$ with $\nu \ge 2$, $K_{n} \in \mathcal C^{p+1}_\R$ for all large $n$, and for some $\lambda > 0$,
\begin{equation}\label{eqn:dn}
\sum_{\alpha=0}^{p+1} \| K_{n-1}^{(\alpha)}(X_{n,0}; \lambda) \|_\nu + \sum_{\alpha=0}^{p-1} \| \varepsilon_1^2 K_{n-1}^{(\alpha)} (X_{n,1}) \|_\nu + \| \varepsilon_1 K_{n-1}^{(p)} (X_{n,1}) \|_\nu = O(1).
\end{equation}
\end{condition}

We remark that in Condition \ref{cond:2} the function $K$ itself
does not have to be continuous. For example, if $K(x) = {\bf 1}_{x
\le 0}$; let $a_0 = 1$ and $F_\varepsilon$ (resp. $f_\varepsilon$)
be the distribution (resp. density) function of $\varepsilon_i$.
Then $K_1(x) = F_\varepsilon(-x)$ which is in ${\cal C}^{p+1}_\R$
if $F_\varepsilon$ is so. If $\sup_x |K_{n-1}^{(1+p)}(x)| <
\infty$, then for all $0 \le \alpha \le p$, there exists a
constant $C > 0$ such that $|K_{n-1}^{(\alpha)}(x)| \le C
(1+|x|)^{1+p-\alpha}$, and (\ref{eqn:dn}) holds if $\varepsilon_i
\in {\cal L}^{2\nu(1+p)}$.

\begin{theorem}{(Wu, 2006)}\label{thm:Wu06}
Assume that $K$ has power rank $p \geq 1$ with respect to $X_i$ and Condition \ref{cond:2} holds with
$\nu = 2$. (i) If $p (2\beta-1) < 1$, let
\begin{equation}\label{eqn:sgmanp}
\sigma_{n,p} = n^H \ell^p(n) \kappa_p \| Z_{p,\beta}(1) \|, \mbox{ where } H={1- p(\beta -1/2)},
\end{equation}
then in the space $\mathcal C_{[0,1]}$ we have the weak convergence
\begin{equation*}
\{S_{nt} / \sigma_{n,p}, ~ 0\le t\le 1\} \Rightarrow \{ Z_{p,\beta}(t) / \|Z_{p,\beta}(1)\|,\ 0 \le t\le 1\}.
\end{equation*}
(ii) If $p (2\beta -1) > 1$, then $D_0 := \sum_{j=0}^\infty {\cal
P}_0 Y_j \in {\cal L}^2. $ Assume $\|D_0\| > 0$. Then we have
\begin{equation}\label{eqn:lrdclt}
\{S_{nt} / \sigma_n,~ 0\le t\le 1\} \Rightarrow \{\B(t), ~ 0\le t\le 1\}, \mbox{ where } \sigma_n = \|D_0\| \sqrt n.
\end{equation}
\end{theorem}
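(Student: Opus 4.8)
The plan is to split $K(X_i)-\mu$ into the leading term of power rank $p$ plus a remainder, to show that in case (i) the leading term drives the convergence to the Hermite process while the remainder is negligible, and in case (ii) that a martingale term takes over. First I would expand $K(X_i)-\mu$ in the Appell (multilinear) polynomials of the innovations. Because $K$ has power rank $p$, the cumulants $\kappa_1=\cdots=\kappa_{p-1}=0$ annihilate all chaos components of order below $p$, so the lowest surviving component is the degree-$p$ form
\begin{equation*}
L_i = \kappa_p \sum_{0 \le j_1 < \cdots < j_p} a_{j_1} \cdots a_{j_p}\, \varepsilon_{i-j_1} \cdots \varepsilon_{i-j_p},
\end{equation*}
and I would set $R_i = K(X_i)-\mu-L_i$. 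The smoothness $K_n \in \mathcal C^{p+1}_\R$ and the moment bounds (\ref{eqn:dn}) of Condition \ref{cond:2} are exactly what legitimizes this expansion and, later, controls $R_i$.

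For case (i), where $p(2\beta-1)<1$, I would prove that $\sigma_{n,p}^{-1}\sum_{i=1}^{\lfloor nt\rfloor} L_i \Rightarrow Z_{p,\beta}(t)/\|Z_{p,\beta}(1)\|$. Interchanging summations, $\sum_{i=1}^{m} L_i = \sum_{\mathbf s} c_m(\mathbf s)\,\varepsilon_{s_1}\cdots\varepsilon_{s_p}$ is an off-diagonal multilinear form with kernel $c_m(\mathbf s) = \kappa_p \sum_{\max_k s_k \le i \le m} \prod_{k=1}^p a_{i-s_k}$. The regular variation $a_j = j^{-\beta}\ell(j)$ gives, under the scaling $s_k = \lfloor n u_k \rfloor$ and $m=\lfloor nt\rfloor$, the kernel limit
\begin{equation*}
n^{p\beta - 1} \ell^{-p}(n)\, c_{\lfloor nt \rfloor}(\lfloor n u_1 \rfloor, \ldots, \lfloor n u_p \rfloor) \to \kappa_p \int_0^t \prod_{k=1}^p g_\beta(v - u_k)\, dv,
\end{equation*}
which is precisely the kernel of $Z_{p,\beta}(t)$ in (\ref{eqn:MWI}). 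Convergence of the normalized form to the multiple Wiener-It\^{o} integral then follows from a non-central limit theorem for multilinear forms of i.i.d.\ innovations: one checks $\mathcal L^2$-convergence of the rescaled kernel, shows the coincident-index (diagonal) contributions are of smaller order, and identifies the Brownian increments as the scaling limit of the partial sums of $\varepsilon_s$. A variance count confirms the normalization: the kernel scales like $n^{1-p\beta}\ell^p(n)$ and the $p$ stochastic integrations contribute a further $n^{p/2}$, giving standard deviation of order $n^{1-p\beta+p/2}\ell^p(n)=n^{H}\ell^p(n)$, matching $\sigma_{n,p}$ in (\ref{eqn:sgmanp}).

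It then remains, still in case (i), to discard the remainder and to prove tightness. The partial sums $\sum_{i \le \lfloor nt\rfloor} R_i$ collect only chaos of order strictly above $p$, whose covariances decay faster than those of $L_i$; bounding them through the projections $\|\mathcal P_0 R_i\|$ controlled by (\ref{eqn:dn}) gives $\mathrm{Var}(\sum_{i\le nt} R_i) = o(\sigma_{n,p}^2)$, uniformly in $t$. For tightness I would establish an increment bound $\|S_{\lfloor nt_2\rfloor}-S_{\lfloor nt_1\rfloor}\| \le C\,\sigma_{n,p}\,(t_2-t_1)^{H}$; since $p(2\beta-1)<1$ forces $H>1/2$, so that $2H>1$, Billingsley's second-moment criterion applies and, together with the finite-dimensional convergence above, yields $S_{nt}/\sigma_{n,p}\Rightarrow Z_{p,\beta}(t)/\|Z_{p,\beta}(1)\|$. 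The hard part throughout is this case: matching the discrete $p$-fold sum to the continuous integral, killing the diagonals, and---most delicately---showing the remainder truly beats the leading covariance rate, which is where Condition \ref{cond:2} does the real work.

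For case (ii), where $p(2\beta-1)>1$, the same power-rank analysis shows $\gamma_Y(k)=\E[(K(X_0)-\mu)(K(X_k)-\mu)]$ decays like $k^{-p(2\beta-1)}$ and is summable, so $(Y_i-\mu)$ has short memory. I would first verify that $\sum_{j\ge0}\|\mathcal P_0 Y_j\|<\infty$ under (\ref{eqn:dn}), whence $D_0=\sum_{j=0}^\infty \mathcal P_0 Y_j$ converges in $\mathcal L^2$. Writing $D_i=\sum_{j\ge i}\mathcal P_i Y_j$ for the stationary, ergodic martingale differences and $M_m=\sum_{i=1}^m D_i$, a telescoping projection estimate gives $\|S_m - M_m\| = o(\sqrt m)$, so $S$ and $M$ share the same limit. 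The functional central limit theorem for stationary ergodic $\mathcal L^2$ martingale differences---the conditional variance converging to $\|D_0\|^2$ by ergodicity, tightness from Doob's maximal inequality---then gives $S_{nt}/(\|D_0\|\sqrt n)\Rightarrow \B(t)$. This regime is a comparatively standard martingale-approximation argument once the summability of the covariances is in hand.
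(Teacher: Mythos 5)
The paper itself does not prove Theorem \ref{thm:Wu06} --- it is quoted from Wu (2006) --- but your outline follows essentially the same route as that proof and as the machinery this paper reproduces in its Appendix: the Volterra decomposition $Y_n = L_{n,p} + \kappa_p U_{n,p}$ of (\ref{eqn:lq110}), negligibility of the remainder via the projection bound (\ref{eqn:54401}), a non-central limit theorem for the off-diagonal multilinear form $T_{n,p}$ with the kernel converging to that of (\ref{eqn:MWI}), tightness from $2H>1$, and a martingale approximation in the regime $p(2\beta-1)>1$. The sketch is correct in all essentials; the one loose phrase is describing the remainder as ``chaos of order strictly above $p$'' --- for non-Gaussian innovations the Volterra expansion is not an orthogonal chaos decomposition, so the remainder must be controlled through the decay of $\|{\cal P}_0 L_{n,p}\|$ rather than by orthogonality, which is exactly where Condition \ref{cond:2} enters, as you yourself note.
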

The above result can not be directly applied for making statistical
inference for the mean $\mu = \E K(X_i)$ since $\sigma_{n,p}$ and
$\sigma_n$ are typically unknown. Additionally, the dichotomy in
Theorem \ref{thm:Wu06} causes considerable inconveniences in hypothesis
testings or constructing confidence intervals for $\mu$. The primary
goal of the paper is to establish the validity of some re-sampling techniques so that the distribution of $S_n$ can be
estimated.

\subsection{Block sampling}\label{subsec:blocksampling}
At the outset we assume that $\mu = \E K(X_i) = 0$. The block sampling method by Hall, Horowitz and Jing (1995)
can be described as follows. Let $l$ be the block size satisfying
$l = l_n \to \infty$ and $l / n \to 0$. For presentational
simplicity we assume that, besides $Y_1, \ldots, Y_n$, the past
observations $Y_{-l}, \ldots, Y_0$ are also available. Define
\begin{equation*}
s_l = \| S_l \|,
\end{equation*}
and the empirical distribution function
\begin{equation}\label{eqn:Fnx}
F_n(x) = {1\over n} \sum_{i=1}^n
 {\bf 1}_{Y_i + Y_{i-1} + \cdots + Y_{i-l+1} \le x s_l}.
\end{equation}
If $s_l$ is known, we say that the block sampling method is valid if
\begin{equation}\label{eqn:bsmvld}
\sup_{x \in \R} |F_n(x) - \p(S_n / s_n \le x)| \to 0 \mbox{ in probability.}
\end{equation}
In the long-memory case, the above convergence relation has a deeper layer of meaning since, by Theorem \ref{thm:Wu06}, $S_n/s_n$ can have either a Gaussian or non-Gaussian limiting distribution. In comparison, for short-memory processes, typically $S_n/s_n$ has a Gaussian limit. Ideally, we hope that (\ref{eqn:bsmvld}) holds for both cases in Theorem \ref{thm:Wu06}. Then we do not need to worry about the dichotomy of which limiting distribution to use. As a primary goal of the paper, we show that this is indeed the case.

In practice, both $\mu = \E K(X_i)$ and $s_l$ are not known. We
can simply estimate the former by $\bar Y_n = \sum_{i=1}^n Y_i /
n$ and the latter by
\begin{equation}\label{eqn:Q1}
\tilde s_l^2 = { {\tilde Q_{n, l}} \over n}, \mbox{ where }
 \tilde Q_{n, l} = \sum_{i=1}^n
 |Y_i + Y_{i-1} + \cdots + Y_{i-l+1}-l \bar Y_n |^2.
\end{equation}
The realized version of $F_n(x)$ in (\ref{eqn:Fnx}) now has the form
\begin{equation*}
\tilde F_n(x) = {1\over n} \sum_{i=1}^n
 {\bf 1}_{Y_i + Y_{i-1} + \cdots + Y_{i-l+1} -l \bar Y_n
 \le x \tilde s_l},
\end{equation*}
and correspondingly (\ref{eqn:bsmvld}) becomes
\begin{equation}\label{eqn:O12451}
\sup_{x \in \R} |\tilde F_n(x)
 - \p(S_n / \tilde s_n \le x)| \to 0
 \mbox{ in probability.}
\end{equation}
Later in Section \ref{subsec:Hhat} we will propose a consistent
estimate $\tilde s_n$ of $s_n$. In Section \ref{sec:cesd} we shall
show that (\ref{eqn:bsmvld}) holds for both cases in Theorem
\ref{thm:Wu06}. This entails (\ref{eqn:O12451}) if estimates $\tilde
s_l$ and $\tilde s_n$ satisfy $\tilde s_l / s_l \to 1$ and $\tilde
s_n / s_n \to 1$ in probability and $l (\bar Y_n - \mu) =
o_\p(s_l)$. With (\ref{eqn:O12451}), we can construct the two-sided
$(1-\alpha)$-th $(0 < \alpha < 1)$ and the upper one-sided $(1-\alpha)$-th
confidence intervals for $\mu$ as $[\bar Y_n - \tilde q_{1-\alpha/2}
\tilde s_n / n, \, \bar Y_n - \tilde q_{\alpha/2} \tilde s_n / n]$
and $[\bar Y_n - \tilde q_{1-\alpha} \tilde s_n / n, \,\infty)$
respectively, where $\tilde q_\alpha$ is the $\alpha$-th sample
quantile of $\tilde F_n(\cdot)$.

\subsection{Consistency of empirical sampling distributions}\label{sec:cesd}
Let $(\varepsilon'_j)_{j \in \Z}$ be an iid copy of
$(\varepsilon_j)_{j \in \Z}$, hence $\varepsilon'_i,
\varepsilon_l$, $i,l \in \Z$, are iid; let
\begin{equation}\label{eqn:Xistar}
X_i^* = X_i + \sum_{j=-\infty}^0 a_{i-j} (\varepsilon_j' - \varepsilon_j).
\end{equation}
Recall $a_j = 0$ if $j < 0$. We can view $X_i^*$ as a coupled
process of $X_i$ with $\varepsilon_j$, $j \leq 0$, in the latter
replaced by their iid copies $\varepsilon'_j$, $j \leq 0$. Note that, if $i
\le 0$, the two random variables $X_i$ and $X_i^* =
\sum_{j=0}^\infty a_j \varepsilon_{i-j}'$ are independent of each
other. Following Wu (2005), we define the physical dependence measure
\begin{equation}\label{eqn:taun}
\tau_{i, \nu} = \| K(X_i) - K(X_i^*) \|_\nu,
\end{equation}
which quantifies how the process $Y_i = K(X_i)$ forgets the past $\varepsilon_j$, $j \leq 0$.

\begin{theorem}\label{thm:L1}
Assume $\mu = \E Y_i = 0$, $p \geq 1$, $l \asymp n^{r_0}$ for some $0 < r_0 < 1$, and Condition \ref{cond:2} holds with
$\nu = 2$. (i) If $p (2\beta-1) < 1$, then
\begin{equation}\label{eqn:BSTPL}
\sup_{x \in \R} \left| F_n(x) - \p(Z_{p, \beta}(1) \le x ) \right| \to 0 \mbox{ in probability}.
\end{equation}
(ii) Let $Z \sim N(0, 1)$ be standard Gaussian. If $p (2\beta-1) >
1$, we have
\begin{equation*}
\sup_{x \in \R} \left| F_n(x) - \p(Z \le x ) \right| \to 0 \mbox{ in probability}.
\end{equation*}
Hence under either (i) or (ii), we have (\ref{eqn:bsmvld}).
\end{theorem}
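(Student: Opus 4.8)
The plan is to prove both parts through the standard mean–variance route for empirical distribution functions of dependent arrays. Writing $T_i=\sum_{k=i-l+1}^{i}Y_k$ for the block sum ending at index $i$, so that $F_n(x)=n^{-1}\sum_{i=1}^n\mathbf 1_{T_i\le x s_l}$, I would show that (a) $\E F_n(x)\to F_\infty(x)$ for every $x$, and (b) $\operatorname{Var}F_n(x)\to 0$, and then upgrade the resulting pointwise convergence in probability to the uniform statement by a P\'olya-type argument. Here $F_\infty$ denotes the distribution function of the standardized Hermite variable $Z_{p,\beta}(1)/\|Z_{p,\beta}(1)\|$ in case (i) (equivalently $Z_{p,\beta}(1)$ under the unit-variance normalization of Theorem \ref{thm:Wu06}) and of $N(0,1)$ in case (ii); both are continuous, which is all the final step will use.

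For the mean, stationarity gives $\E F_n(x)=\p(S_l\le x s_l)=\p(S_l/s_l\le x)$. Since $l\to\infty$, I would invoke Theorem \ref{thm:Wu06} with sample size $l$ in place of $n$ and evaluated at $t=1$, which yields $S_l/\sigma_{l,p}\Rightarrow Z_{p,\beta}(1)/\|Z_{p,\beta}(1)\|$ in case (i) and $S_l/\sigma_l\Rightarrow\B(1)$ in case (ii). Because $s_l=\|S_l\|$, a uniform-integrability argument under the moment bounds of Condition \ref{cond:2} gives $s_l/\sigma_{l,p}\to 1$ (resp.\ $s_l/\sigma_l\to 1$), so that $S_l/s_l$ converges weakly to the standardized limit; continuity of $F_\infty$ then promotes this to $\E F_n(x)\to F_\infty(x)$ at every $x$.

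The heart of the proof is the variance bound $\operatorname{Var}F_n(x)=n^{-2}\sum_{i,j}\operatorname{Cov}(\mathbf 1_{T_i\le xs_l},\mathbf 1_{T_j\le xs_l})\to 0$, which I would obtain by splitting the pairs according to $|i-j|$. Overlapping blocks, $|i-j|<l$, number $O(nl)$ and each covariance is at most $\tfrac14$, so they contribute $O(l/n)\to 0$ using only $l/n\to 0$. For separated blocks, $|i-j|\ge l$, I would first replace the indicator by Lipschitz sandwiching functions $h^\pm_\eta$ of slope $1/\eta$ supported in $[x-\eta,x+\eta]$, with $\mathbf 1_{\,\cdot\,\le x-\eta}\le h^-_\eta\le\mathbf 1_{\,\cdot\,\le x}\le h^+_\eta\le\mathbf 1_{\,\cdot\,\le x+\eta}$; the sandwiching gap for the averages $G^\pm_n(x)=n^{-1}\sum_i h^\pm_\eta(T_i/s_l)$ is then $F_\infty(x+\eta)-F_\infty(x-\eta)\to 0$ as $\eta\downarrow 0$ by continuity. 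Conditioning on $\mathcal F_{-\infty}^{\,i}$ (so that $h^\pm_\eta(T_i/s_l)$ becomes measurable) reduces the separated-pair covariance to $\|\E[h^\pm_\eta(T_j/s_l)\mid\mathcal F_{-\infty}^{\,i}]-\E h^\pm_\eta(T_j/s_l)\|_1$, which measures how conditioning on the innovations $\varepsilon_s$, $s\le i$, perturbs the law of the later block; equivalently, through the coupling $X^\ast$ of \eqref{eqn:Xistar} and the physical dependence measure \eqref{eqn:taun}, it is controlled by how strongly $T_j$ retains those innovations.

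The hard part is that the first-order version of this bound is too weak when $p\ge 2$. Replacing the innovations $\varepsilon_s$, $s\le i$, by independent copies changes $T_j$ by $\sum_{k=j-l+1}^{j}(Y_k-Y_k^\ast)$, and a direct computation (e.g.\ $K=H_p$ applied to a Gaussian $X_i$) shows $\tau_{m,\nu}\asymp m^{1/2-\beta}$, a rate that does not see the power rank; summing the resulting covariance bound over $|i-j|\ge l$ only succeeds under the restriction $r_0<1/p$. To reach every $r_0\in(0,1)$ I would instead extract the power-rank-$p$ cancellation from the conditional fluctuation: the content of $X_k$ ($k>i$) coming from innovations $\le i$ has standard deviation $\asymp (k-i)^{1/2-\beta}$ and, because $K$ has power rank $p$, it enters $\E[Y_k\mid\mathcal F_{-\infty}^{\,i}]-\E Y_k$ only at order $p$, so that $\|\E[Y_k\mid\mathcal F_{-\infty}^{\,i}]-\E Y_k\|\asymp (k-i)^{-p(2\beta-1)/2}$. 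Summing over the block and dividing by $s_l\asymp l^{H}$ gives a separated-pair covariance of order $\eta^{-1}(l/|i-j|)^{p(2\beta-1)/2}$; since $l/n\to 0$, summing over the $O(n)$ separated lags yields $O(\eta^{-1}(l/n)^{p(2\beta-1)/2})\to 0$ for each fixed $\eta$. Making this rigorous---controlling the full conditional law of $T_j$, not merely its mean, and supplying the order-$p$ reduction uniformly in $l$ through the projection operators $\mathcal P_s$ and the reduction principle behind Theorem \ref{thm:Wu06}---is the main obstacle; case (ii) is easier because $p(2\beta-1)>1$ makes the $Y_i$-covariances summable. Combining (a) and (b) gives $F_n(x)\to F_\infty(x)$ in probability for each $x$, and a final P\'olya argument over a finite grid, using monotonicity of $F_n$ and continuity of $F_\infty$, yields $\sup_x|F_n(x)-F_\infty(x)|\to 0$ in probability, which is \eqref{eqn:BSTPL} and its Gaussian analogue, hence \eqref{eqn:bsmvld}.
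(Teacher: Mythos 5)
Your overall architecture coincides with the paper's: reduce to pointwise convergence of $\E F_n(x)$ and $\mathrm{var}(F_n(x))$, handle the mean by applying Theorem \ref{thm:Wu06} at scale $l$, split the covariance sum into overlapping pairs (contributing $O(l/n)$) and separated pairs, and control the latter by coupling the later block to a version independent of ${\cal F}_{-\infty}^0$ after smoothing the indicator (your Lipschitz sandwich plays the role of the paper's $\lambda$-window plus Markov step in (\ref{eqn:O2472})). The difficulty you single out --- that the first-order rate $\tau_{m,2}\asymp m^{1/2-\beta}$ ignores the power rank and fails for $r_0\ge 1/p$ --- is exactly the right one, and you correctly guess that the fix is an order-$p$ reduction. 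But you stop there: the statement you need, a bound of the form $\|B_{i,l}-B_{i,l}^*\|\le 2\|S_l-\E(S_l\mid{\cal F}_{l+1-i}^\infty)\|=s_l\,O[l^{-\varphi_1}+(l/i)^{\varphi_2}]$, is precisely Lemma \ref{lem:S6511}(ii), and its proof is where the real work lives: the Volterra decomposition $Y_n=L_{n,p}+\kappa_p U_{n,p}$ of (\ref{eqn:lq110}), the bound (\ref{eqn:54401}) showing the remainder is $o(\sigma_{l,p})$, and the martingale computation of Lemma \ref{lem:S096311}(ii) for the leading Volterra term. Note also that what is needed is an ${\cal L}^2$ coupling bound on the whole block sum, not just the conditional-mean rate $\|\E[Y_k\mid{\cal F}_{-\infty}^i]-\E Y_k\|$ that you sketch; the coupling-plus-Markov device makes the ${\cal L}^2$ distance sufficient, and the rate it actually delivers for the leading term is $(l/i)^{\varphi_2}$ with $\varphi_2<\beta-1/2$ uniformly in $p$, not your heuristic $(l/|i-j|)^{p(2\beta-1)/2}$ --- though any positive exponent suffices once $l\asymp n^{r_0}$.

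Two further gaps. Your one-line dismissal of case (ii) is not an argument: summability of the covariances of $Y_i$ does not directly give the required $\|B_{i,l}-\E(B_{i,l}\mid{\cal F}_1^\infty)\|=o(\sqrt l)$; the paper proves (\ref{eqn:O3161}) via the orthogonal decomposition (\ref{eqn:O3331}) into projections ${\cal P}_k$, the bound $\min(2\eta_{j-k},\tau^*_{N-l+1})$, and dominated convergence, using summability of the predictive measures $\eta_i=\|{\cal P}_0Y_i\|$ together with $\tau^*_m\to0$. And your mean step leans on an unproved claim $s_l/\sigma_{l,p}\to1$ via ``uniform integrability,'' which under Condition \ref{cond:2} with $\nu=2$ does not follow from weak convergence plus a second-moment bound alone; the paper obtains it from the same reduction machinery (Lemma \ref{lem:snS}).
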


As a useful and interesting fact, we emphasize from Theorem
\ref{thm:L1} that $F_n(\cdot)$ consistently estimates the
distribution of $S_n / s_n$, regardless of whether the limiting
distribution of the latter is Gaussian or not. In other words,
$F_n(\cdot)$ automatically adapts the limiting distribution of
$S_n / s_n$. Bertail, Politis and Romano (1999) obtained a result
of similar nature for strong mixing processes where the limiting
distribution can possibly be non-Gaussian; see also Politis,
Romano and Wolf (1999).

\bigskip

\begin{proof}
(Theorem \ref{thm:L1}) For (i), note that $Z_{p,\beta}(1)$ has a continuous distribution, by the Glivenko-Cantelli argument (cf. Chow and Teicher, 1997) for the uniform convergence of empirical distribution functions, (\ref{eqn:BSTPL}) follows if we can show that, for any fixed $x$,
\begin{equation*}
\E|F_n(x) - \p(Z_{p, \beta}(1) \le x )|^2 = {\rm var}(F_n(x)) + |\E F_n(x) - \p(Z_{p, \beta}(1) \le x )|^2 \to 0.
\end{equation*}
Let $B_{i, l} = Y_i + Y_{i-1} + \ldots + Y_{i-l+1}$. Since $B_{i, l}/ s_l \Rightarrow Z_{p, \beta}(1)$ as $n \to \infty$, the second term on the right hand side of the above converges to $0$. We now show that the first term
\begin{equation}\label{eqn:60001}
{\rm var} (F_n(x)) \leq {2\over n} \sum_{i=0}^{n-1} |{\rm cov}({\bf 1}_{B_{0, l} / s_l \le x},\ {\bf 1}_{B_{i, l} / s_l \le x}) | \to 0.
\end{equation}
Here we use the fact that $(B_{i,l})_{i \in \Z}$ is a stationary
process. To show (\ref{eqn:60001}), we shall apply the tool of
coupling. Recall (\ref{eqn:Xistar}) for $X_i^*$. Let $B_{i, l}^* =
\sum_{j=i-l+1}^i Y_j^*$, where $Y_j^* = K(X_j^*)$. Since $B_{i,
l}^*$ and ${\cal F}_{-\infty}^0$ are independent, $\E ({\bf
1}_{B^*_{i, l} / s_l \le x} | {\cal F}_{-\infty}^0)= \p(B^*_{i, l}
/ s_l \le x)$. Hence
\begin{eqnarray}\label{eqn:O2471}
 |{\rm cov}({\bf 1}_{B_{0, l} / s_l \le x},
 {\bf 1}_{B_{i, l} / s_l \le x}) |
 &=&  |\E[{\bf 1}_{B_{0, l} / s_l \le x}
 ({\bf 1}_{B_{i, l} / s_l \le x}
 - {\bf 1}_{B^*_{i, l} / s_l \le x} )] | \cr
 &\le&  \E |{\bf 1}_{B_{i, l} / s_l \le x}
 - {\bf 1}_{B^*_{i, l} / s_l \le x} |.
\end{eqnarray}
For any fixed $\lambda > 0$, by the triangle and the Markov
inequalities,
\begin{eqnarray}\label{eqn:O2472}
\E |{\bf 1}_{B_{i, l} / s_l \le x}
 - {\bf 1}_{B^*_{i, l} / s_l \le x} |
 &\le& \E ({\bf 1}_{|B_{i, l} / s_l - x| \le \lambda})
 + \E ({\bf 1}_{ |B_{i, l} / s_l - B^*_{i, l} / s_l|
  \ge \lambda} ) \cr
 &\le& \p(|B_{i, l} / s_l - x| \le \lambda)
 + { {\| B_{i, l} - B^*_{i, l}\|} \over {\lambda s_l}}.
\end{eqnarray}
Since $\E(B_{i, l} | {\cal F}_1^\infty) = \E(B^*_{i, l} | {\cal F}_1^\infty)$ for $i > 2 l$, by Lemma \ref{lem:S6511}(ii) and the fact that $B^*_{i, l} - \E(B^*_{i, l} | {\cal F}_1^\infty)$ and $B_{i, l}- \E(B_{i, l} | {\cal F}_1^\infty) $ are identically distributed, we have
\begin{eqnarray}\label{eqn:O3171}
\| B_{i, l}- B^*_{i, l} \| & \leq & \| B_{i, l}- \E(B_{i, l} | {\cal F}_1^\infty) \| + \| \E(B_{i, l} | {\cal F}_1^\infty) - B^*_{i, l}\| \cr
 & = & 2 \| B_{i, l}- \E(B_{i, l} | {\cal F}_1^\infty) \| \cr
 & = & 2 \|S_l - \E(S_l | {\cal F}_{l+1-i}^\infty) \| \cr
 & = & s_l O[l^{-\varphi_1} + (l/i)^{\varphi_2}].
\end{eqnarray}
Assume without loss of generality that $\varphi_2 < 1$. Otherwise we can replace it by $\varphi_2' = \min(\varphi_2, 1/2)$. By Lemma
\ref{lem:S6511}(i) and Lemma \ref{lem:snS}, we have $\| B_{0, l} \| = O(s_l)$. Recall that $l \asymp n^{r_0}$, $0 < r_0 < 1$, we have
\begin{eqnarray}\label{eqn:O3131}
{1\over n} \sum_{i=0}^{n-1}
 { {\| B_{i, l} - B^*_{i, l}\|} \over {s_l}}
 &=& {{O(1)} \over n }
  \sum_{i=0}^{2l} O(1)
 + {{O(1)} \over n } \sum_{i=2l+1}^{n-1}
  O[l^{-\varphi_1} + (l/i)^{\varphi_2}] \cr
  &=& O(l/n) + O(l^{-\varphi_1})
  + O[(l/n)^{\varphi_2}] = O(n^{-\phi}),
\end{eqnarray}
where $\phi = \min(1-r_0, \varphi_1 r_0, (1-r_0) \varphi_2)$.
Since $\p( |B_{i, l}/ s_l - x| \le \lambda) \to \p( |Z_{p,
\beta}(1) - x| \le \lambda)$, (\ref{eqn:60001}) then follows from
(\ref{eqn:O2471}) and (\ref{eqn:O2472}) by first letting $n \to
\infty$, and then $\lambda \to 0$.

For (ii), by the argument in (i), it suffices to show that
\begin{equation}\label{eqn:O3161}
\lim_{n \to \infty} {1\over n} \sum_{i=1}^n
 { {\| B_{i, l} - \E(B_{i, l} | {\cal F}_1^\infty)
 \|} \over { \sqrt l}} = 0.
\end{equation}
More specifically, if (\ref{eqn:O3161}) is valid, then by $\|
B_{i, l}- B^*_{i, l} \| \le 2 \| B_{i, l}- \E(B_{i, l} | {\cal
F}_1^\infty) \|$, we have (\ref{eqn:O3131}) and consequently
(\ref{eqn:60001}).

Let $N > 3 l$ and $G_N = B_{N, l}-\E(B_{N,l}|{\cal F}_1^\infty)$.
Observe that $({\cal P}_k G_N)_{k=-\infty}^N$ is a sequence of
martingale differences and $G_N = \sum_{k=-\infty}^N {\cal P}_k G_N$,
we have
\begin{equation}\label{eqn:O3331}
\| G_N \|^2 = \sum_{k=-\infty}^N \| {\cal P}_k G_N \|^2.
\end{equation}
By (\ref{eqn:54401}) and Lemma \ref{lem:prdmcexp} with $\nu = 2$, we
know that the predictive dependence measures $\eta_i = \|{\cal
P}_0 Y_i\|$ is summable. Recall (\ref{eqn:taun}) for $\tau_{n,
\nu}$. Let $\tau^*_n = \max_{m \ge n} \tau_{m, 2}$. Then
$\tau^*_n$ is non-increasing and $\lim_{n \to \infty} \tau^*_n =
0$. Since $\|{\cal P}_k \E(Y_j | {\cal F}_1^\infty) \| \le \|{\cal
P}_k Y_j \| = \eta_{j-k}$ and $\|Y_j - \E(Y_j | {\cal F}_1^\infty)
\| \le \tau_{j, 2}$, we have
\begin{eqnarray}\label{eqn:O3341}
\|{\cal P}_k G_N\| &\le&
 \sum_{j=N-l+1}^N \|{\cal P}_k [Y_j - \E(Y_j | {\cal F}_1^\infty) ] \| \cr
  &\le& \sum_{j=N-l+1}^N \min(2 \eta_{j-k}, \, \tau^*_{N-l+1}) \le \eta_*,
\end{eqnarray}
where $\eta_* = 2 \sum_{i=0}^\infty \eta_i$. Then, by (\ref{eqn:O3331}) and the Lebesgue dominated convergence theorem, we have
\begin{eqnarray}\label{eqn:O3471}
\lim_{n \to \infty} { {\| G_N \|^2} \over l}
 & \leq & \lim_{n \to \infty}
  \sum_{k=-\infty}^N {\eta_* \over l} \| {\cal P}_k G_N \| \cr
 &\leq& \lim_{n \to \infty} \sum_{k=-\infty}^N {\eta_*\over l}
      \sum_{j=N-l+1}^N \min(2 \eta_{j-k}, \, \tau^*_{N-l+1}) \cr
 & \leq & \lim_{n \to \infty} \eta_* \sum_{i=0}^\infty \min(2 \eta_i, \, \tau^*_{N-l+1}) = 0,
\end{eqnarray}
since $\tau^*_{N-l+1} \le \tau^*_{l} \to 0$ as $l \to \infty$ and $\eta_i$ are summable. Hence $\sum_{N=3 l}^n \| G_N \|^2 = o(n l)$. Note that $l = o(n)$, (\ref{eqn:O3161}) follows by the inequality $(\sum_{i=1}^n |z_i| / n)^2 \leq \sum_{i=1}^n z_i^2 / n$.
\end{proof}

\subsection{Variance estimation}\label{subsec:sltilde}
Since $F_n(\cdot)$ and the relation (\ref{eqn:bsmvld}) involve unknown quantities $s_l$ and $s_n$, Theorem \ref{thm:L1} is not directly applicable for making statistical inferences on $\mu$, while it implies (\ref{eqn:O12451}) if we can find estimates $\tilde s_l$ and $\tilde s_n$ such that $\tilde s_l/s_l \to 1$ and $\tilde s_n / s_n \to 1$ in probability and $l(\bar Y_n - \mu) = o_\p(s_l)$. We propose to estimate $s_l$ by using (\ref{eqn:Q1}); see Theorem \ref{eqn:Sgm2} for the asymptotic properties of the variance estimate $\tilde s_l^2$. However, there is no analogous way to propose a consistent estimate for $s_n$ since one can not use blocks of size $n$ to estimate it. One way out is to use its regularly varying property (cf. equations (\ref{eqn:O38231}) and (\ref{eqn:O38251})) via estimating the self-similar parameter $H$ (see Section \ref{subsec:Hhat}). Section \ref{sec:S19950} proposes a subsampling approach which does not require estimating $H$. Recall (\ref{eqn:sgmanp}) and (\ref{eqn:lrdclt}) for the definitions of $\sigma_{n, p}$ and $\sigma_n$, respectively. Lemma \ref{lem:snS} asserts that they are asymptotically equivalent to $s_n$.

\begin{lemma}\label{lem:snS}
Recall that $s_l = \| S_l \|$. Under conditions in Theorem \ref{thm:Wu06}(i), we have
\begin{equation}\label{eqn:O38231}
s_l \sim \sigma_{l, p} = l^H \ell^p(l)  \kappa_p \| Z_{p,\beta}(1) \|,
\end{equation}
as $l \to \infty$. Under conditions in Theorem \ref{thm:Wu06}(ii), we have
\begin{equation}\label{eqn:O38251}
s_l \sim \sigma_{l} = \|D_0\| \sqrt l .
\end{equation}
Under either case, $l \|\bar Y_n - \mu \| = o(s_l)$ if $l \asymp n^{r_0}$, $0 < r_0 < 1$.
\end{lemma}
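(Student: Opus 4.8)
Since $\E S_l = 0$, we have $s_l^2 = \mathrm{var}(S_l)$, and since $\bar Y_n - \mu = S_n/n$ we have $\|\bar Y_n - \mu\| = s_n/n$; hence part (iii) will reduce to a scaling computation once (\ref{eqn:O38231}) and (\ref{eqn:O38251}) are established. The overall strategy is therefore to pin down the variance asymptotics of $S_l$ in each regime.

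For the short-range case (ii) I would argue directly and self-containedly. As observed in the proof of Theorem \ref{thm:L1}, under the hypotheses of Theorem \ref{thm:Wu06}(ii) the predictive dependence measures $\eta_i = \|\mathcal{P}_0 Y_i\|$ are summable. Writing $\gamma_Y(k) = \mathrm{cov}(Y_0, Y_k)$ and using the orthogonality of the martingale differences $\mathcal{P}_j Y_0$ together with $\|\mathcal{P}_{-j} Y_0\| = \eta_j$, one obtains $\sum_k |\gamma_Y(k)| \le (\sum_{i \ge 0} \eta_i)^2 < \infty$ and the long-run-variance identity $\sum_k \gamma_Y(k) = \|\sum_{j \ge 0} \mathcal{P}_0 Y_j\|^2 = \|D_0\|^2$. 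Dominated convergence then gives
\begin{equation*}
\frac{s_l^2}{l} = \sum_{|k| < l} \Big( 1 - \frac{|k|}{l} \Big) \gamma_Y(k) \longrightarrow \sum_{k = -\infty}^\infty \gamma_Y(k) = \|D_0\|^2,
\end{equation*}
which is (\ref{eqn:O38251}).

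In the long-memory case (i) the covariances are not summable, so I would instead upgrade the weak convergence of Theorem \ref{thm:Wu06}(i) to convergence of second moments. Since $S_l / \sigma_{l,p} \Rightarrow W := Z_{p,\beta}(1)/\|Z_{p,\beta}(1)\|$ with $\E W^2 = 1$, the continuous mapping theorem yields $(S_l/\sigma_{l,p})^2 \Rightarrow W^2$, and (\ref{eqn:O38231}) is equivalent to $\E(S_l/\sigma_{l,p})^2 \to \E W^2 = 1$. This convergence of moments follows from the weak convergence once $\{(S_l/\sigma_{l,p})^2\}_l$ is shown to be uniformly integrable, for which a convenient sufficient condition is a uniform bound $\sup_l \|S_l/\sigma_{l,p}\|_{2+\delta} < \infty$ for some $\delta > 0$. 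This is where I expect the main difficulty to lie: establishing such a $(2+\delta)$-th moment bound for a long-memory, possibly non-Gaussian, partial sum. I would aim for $\delta = 2$, i.e.\ $\|S_l\|_4 = O(\sigma_{l,p})$, exploiting $\varepsilon_1 \in \mathcal{L}^4$ (the $\nu = 2$ instance of Condition \ref{cond:2}) and a fourth-moment estimate within the power-rank framework that isolates the leading Hermite-type term (whose fourth moment scales like $\sigma_{l,p}^4$) and controls the remainder. Alternatively, if the martingale-approximation proof of Theorem \ref{thm:Wu06}(i) in Wu (2006) already delivers $\mathrm{var}(S_l) \sim \sigma_{l,p}^2$, then (\ref{eqn:O38231}) may simply be quoted.

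Finally, for (iii) I combine the two cases with $\|\bar Y_n - \mu\| = s_n/n$, so that $l\|\bar Y_n - \mu\|/s_l = (l/n)(s_n/s_l)$. In case (i), $s_n/s_l \sim (n/l)^H \ell^p(n)/\ell^p(l)$, whence the ratio is $\asymp (l/n)^{1-H}\,\ell^p(n)/\ell^p(l)$; since $l \asymp n^{r_0}$ and $H = 1 - p(\beta - 1/2) \in (1/2,1)$, the power $(l/n)^{1-H} \asymp n^{-(1-r_0)(1-H)} \to 0$ dominates the subpolynomial slowly varying factor, so the ratio tends to $0$. In case (ii) the exponent $H$ is replaced by $1/2$ and $(l/n)^{1/2} \asymp n^{-(1-r_0)/2} \to 0$. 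In either case $l\|\bar Y_n - \mu\| = o(s_l)$.
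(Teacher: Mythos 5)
Your treatments of the short-range case and of the final claim are sound: the covariance-summation argument for (\ref{eqn:O38251}) is a correct, slightly more elementary variant of the martingale-approximation route the paper relies on (Wu (2007), Theorem 1, as invoked in the proof of Corollary \ref{cor:H}(iii)), and the scaling computation showing $l\|\bar Y_n-\mu\| = o(s_l)$ is exactly right. The problem is the long-memory case (\ref{eqn:O38231}), where your primary argument has a genuine gap that you flag but do not close. To upgrade $S_l/\sigma_{l,p}\Rightarrow W$ to convergence of second moments you need uniform integrability of $(S_l/\sigma_{l,p})^2$, and the $(2+\delta)$-moment bound you propose for this is not available under the stated hypotheses: Lemma \ref{lem:S6511} yields $\|S_l\|_\nu=O(\sigma_{l,p})$ only for the $\nu$ at which Condition \ref{cond:2} is assumed, and Theorem \ref{thm:Wu06}(i) and Lemma \ref{lem:snS} assume it only with $\nu=2$, which controls the remainder of the Volterra expansion merely in ${\cal L}^2$. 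A fourth-moment bound would require Condition \ref{cond:2} with $\nu=4$ (as in Theorem \ref{eqn:Sgm2}), i.e.\ a strengthening of the hypotheses. The detour is also circular in spirit: you are trying to establish a statement about $\E S_l^2$ by first establishing control of strictly higher moments.

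The direct route, already set up in the Appendix, is the ${\cal L}^2$ reduction principle: write $S_l = \kappa_p T_{l,p} + S_l(L^{(p)})$ as in the proof of Lemma \ref{lem:S6511}, where (\ref{eqn:S7311}) gives $\|S_l(L^{(p)})\|/\sigma_{l,p}=O(l^{-\varphi_1})\to 0$, and compute $\E T_{l,p}^2$ exactly as a multilinear form in the coefficients $a_j$, which is a Riemann-sum approximation of $\|Z_{p,\beta}(1)\|^2$ under the regular variation (\ref{eqn:ai}); Lemma \ref{lem:S096311}(iii) is precisely the rate version of this computation under Condition \ref{cond:1}. The triangle inequality then gives $s_l\sim\sigma_{l,p}$ with no appeal to the weak limit or to uniform integrability. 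Your closing remark that one might ``simply quote'' the variance asymptotics from Wu (2006) is in fact the correct answer, but as written it appears as a fallback rather than as the argument.
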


If $\mu = \E Y_i$ is known, say $\mu = 0$, then we can estimate
$s^2_l$ by
\begin{equation*}
\hat s_l^2 = { {\hat Q_{n, l}} \over n}, \mbox{ where } \hat Q_{n, l} = \sum_{i=1}^n |Y_i + Y_{i-1} + \ldots + Y_{i-l+1}|^2.
\end{equation*}
Clearly $\hat s_l^2$ is an unbiased estimate of $s^2_l =
\|S_l\|^2$. Theorem \ref{eqn:Sgm2} provides a convergence rate of
the estimate. As a simple consequence, we know that $\hat s_l^2$
is consistent.

\begin{theorem}\label{eqn:Sgm2}
Assume that $l \asymp n^{r_0}$, $0 < r_0 < 1$, and Condition \ref{cond:2} holds with $\nu = 4$. (i) If $p(2 \beta - 1) < 1$, then there exists a constant $0 < \phi < 1$ such that
\begin{equation}\label{eqn:O24801}
{\rm var} (\tilde s_l^2 / s_l^2) = O(n^{-\phi}).
\end{equation}
(ii) If $p(2 \beta - 1) > 1$, then ${\rm var} (\tilde s_l^2/ s_l^2) \to 0$. (iii) If $p (2\beta-1) > 1$ and $\tau_{n,4} = O(n^{-\phi_1})$ for some $\phi_1 > 0$, then (\ref{eqn:O24801}) holds as well.
\end{theorem}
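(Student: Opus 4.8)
The plan is to bound the variance by a mean squared error, using $\mathrm{var}(\tilde s_l^2/s_l^2) \le s_l^{-4}\,\E[(\tilde s_l^2 - s_l^2)^2]$, and to split $\tilde s_l^2 - s_l^2 = (\hat s_l^2 - s_l^2) + (\tilde s_l^2 - \hat s_l^2)$, where $\hat s_l^2 = \hat Q_{n,l}/n$ is the unbiased estimate. First I would dispose of the sample-mean correction. Writing $B_{i,l} = Y_i + \cdots + Y_{i-l+1}$ and expanding the square in (\ref{eqn:Q1}) gives $\tilde Q_{n,l} - \hat Q_{n,l} = -2 l\bar Y_n \sum_{i=1}^n B_{i,l} + n l^2 \bar Y_n^2$; since $\sum_{i=1}^n B_{i,l} = l S_n$ up to boundary terms and $\bar Y_n = S_n/n$, this difference is of order $l^2 S_n^2/n$. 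Invoking the $L^4$ moment bound $\|S_n\|_4 = O(s_n)$ --- the $\nu=4$ strengthening of Lemma \ref{lem:S6511}(i), now available because Condition \ref{cond:2} is imposed with $\nu = 4$ --- together with Lemma \ref{lem:snS}, one finds that $(\tilde s_l^2 - \hat s_l^2)/s_l^2$ has $L^2$ norm of order $l^2 s_n^2/(n^2 s_l^2)$, which is $(l/n)^{2(1-H)}$ in case (i) and $l/n$ in cases (ii)--(iii). In every case this is negligible, so it suffices to analyse $\hat s_l^2$.

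Since $\E \hat Q_{n,l}/n = s_l^2$, stationarity of $(B_{i,l})$ gives
\[
\frac{\mathrm{var}(\hat s_l^2)}{s_l^4} = \frac{1}{n^2 s_l^4}\sum_{i,j=1}^n \mathrm{cov}(B_{i,l}^2, B_{j,l}^2) \le \frac{1}{n\,s_l^4}\sum_{|k|<n}\bigl|\mathrm{cov}(B_{0,l}^2, B_{k,l}^2)\bigr|,
\]
and I would bound the summands in two regimes, paralleling the proof of Theorem \ref{thm:L1}. For overlapping or adjacent blocks, $|k|\le 2l$, Cauchy--Schwarz and the bound $\|B_{0,l}\|_4 = O(s_l)$ give $|\mathrm{cov}(B_{0,l}^2,B_{k,l}^2)| = O(s_l^4)$, and the $O(l)$ such terms contribute $O(l/n)$. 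For $|k|>2l$ I would recycle the coupling of (\ref{eqn:O2471}): introducing $B_{k,l}^* = \sum_{j=k-l+1}^k Y_j^*$, which for $k>2l$ is independent of the $\mathcal F_{-\infty}^0$-measurable factor $B_{0,l}^2$, yields $\mathrm{cov}(B_{0,l}^2,B_{k,l}^2) = \E[B_{0,l}^2\,(B_{k,l}^2-(B_{k,l}^*)^2)]$, so by Hölder
\[
\bigl|\mathrm{cov}(B_{0,l}^2,B_{k,l}^2)\bigr| \le \|B_{0,l}\|_4^2\,\bigl(\|B_{k,l}\|_4+\|B_{k,l}^*\|_4\bigr)\,\|B_{k,l}-B_{k,l}^*\|_4 = O(s_l^3)\,\|B_{k,l}-B_{k,l}^*\|_4.
\]

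For part (i) I would invoke the $L^4$ analogue of (\ref{eqn:O3171}), namely $\|B_{k,l}-B_{k,l}^*\|_4 = s_l\,O[l^{-\varphi_1}+(l/k)^{\varphi_2}]$, again a consequence of Lemma \ref{lem:S6511} with $\nu=4$. Then $|\mathrm{cov}(B_{0,l}^2,B_{k,l}^2)| = O(s_l^4)\,O[l^{-\varphi_1}+(l/k)^{\varphi_2}]$, and summing over $2l<|k|<n$ reproduces the computation in (\ref{eqn:O3131}), giving the non-overlapping contribution $\frac1n\sum_k O[l^{-\varphi_1}+(l/k)^{\varphi_2}] = O(n^{-\phi})$ with $\phi = \min(1-r_0,\varphi_1 r_0,(1-r_0)\varphi_2)$. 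Combined with the $O(l/n)$ from the overlapping terms and the reduction above, this establishes (\ref{eqn:O24801}).

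For parts (ii) and (iii), where $s_l \asymp \sqrt l$, the overlapping terms again give $O(l/n)\to 0$, so everything rests on the non-overlapping sum, and this is the step I expect to be the main obstacle: one must control the fourth-order coupling error $\|B_{k,l}-B_{k,l}^*\|_4$ without the polynomial decay available in (i), since the naive triangle bound $\sum_{j=k-l+1}^k \tau_{j,4}$ is too crude to be summable against $1/(n l^2)$. The remedy is to refine it by projections exactly as in (\ref{eqn:O3341})--(\ref{eqn:O3471}): expanding the coupling error into martingale differences and dominating the relevant projections by $\min(2\eta_i,\,\tau^*_{k-l+1})$, where $\tau^*_m = \max_{j\ge m}\tau_{j,4}$ is non-increasing with $\tau^*_m\to 0$ and the $L^4$ predictive measures $\eta_i = \|\mathcal P_0 Y_i\|_4$ are summable. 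For (ii), summability of $\eta_i$ with $\tau^*_{k-l+1}\le\tau^*_l\to0$ lets the Lebesgue dominated convergence theorem force the non-overlapping sum to $0$; as in Theorem \ref{thm:L1}(ii) this argument is merely qualitative and supplies no rate. For (iii), the hypothesis $\tau_{n,4}=O(n^{-\phi_1})$ turns $\tau^*_m$ into an explicit polynomially decaying bound, and summing as in part (i) restores the rate (\ref{eqn:O24801}).
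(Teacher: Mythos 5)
Your proposal follows the paper's proof essentially step for step: the same reduction of $\tilde s_l^2$ to the unbiased $\hat s_l^2$ via the $O(l^2 s_n^2/n)$ bound on $\tilde Q_{n,l}-\hat Q_{n,l}$, the same coupling and Cauchy--Schwarz/H\"older bound $|\mathrm{cov}(B_{0,l}^2,B_{k,l}^2)| \le \|B_{0,l}\|_4^2\,\|B_{k,l}+B^*_{k,l}\|_4\,\|B_{k,l}-B^*_{k,l}\|_4$, the same $L^4$ version of Lemma \ref{lem:S6511}(ii) for part (i), and the same projection refinement with $\min(2\eta_{i,4},\tau^*)$ plus dominated convergence for parts (ii)--(iii). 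The one detail to make explicit for the rate in (iii) is that summability of $\eta_{i,4}=\|\mathcal{P}_0 Y_i\|_4$ is not quite enough: the paper uses the polynomial bound $\eta_{i,4}=O(i^{-1-\varphi_3})$ (derived from Lemma \ref{lem:prdmcexp} and (\ref{eqn:54401})) so that $\sum_i\min(\eta_{i,4},\tau)=O(\tau^{\varphi_3/(1+\varphi_3)})$, which is what converts the decay of $\tau^*_{m,4}$ into the rate (\ref{eqn:O24801}).
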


\begin{proof}
(Theorem \ref{eqn:Sgm2}) For (i), we first consider the case with $\mu = 0$ and show that, for some $\phi > 0$,
\begin{equation}\label{eqn:S11501}
{\rm var} (\hat s_l^2 / s_l^2) = O(n^{-\phi}).
\end{equation}
Recall that $B_{n, l}^* = \sum_{j=n-l+1}^n Y_j^*$, where $Y_j^* =
K(X_j^*)$. Then $\E (B^2_{i, l}) = \E [(B^*_{i, l})^2 | {\cal
F}_0]$ and ${\rm cov}( B^2_{0, l}, B^2_{i, l}) = \E[B^2_{0, l}
(B^2_{i, l}- (B^*_{i, l})^2)]$. By the Cauchy-Schwarz inequality,
\begin{eqnarray}\label{eqn:S20501}
{\rm var} (\hat s_l^2)
 & = & {1 \over n} \sum_{i=1-n}^{n-1}
    (1- |i|/n) {\rm cov}( B^2_{0, l}, \, B^2_{i, l}) \cr
 &\leq& {2 \over n} \sum_{i=0}^{n-1}
   \|B^2_{0,l}\| \| B^2_{i, l}- (B^*_{i, l})^2) \| \cr
 & \leq & {2 \over n} \sum_{i=0}^{n-1} \| B_{0, l} \|_4^2
 \| B_{i, l} + B^*_{i, l} \|_4 \| B_{i, l}- B^*_{i, l} \|_4.
\end{eqnarray}
By Lemma \ref{lem:S6511}(ii) and the argument (\ref{eqn:O3171}) in
the proof of Theorem \ref{thm:L1}(i), for $i > 2 l$, we have
\begin{eqnarray}\label{eqn:O097301}
\| B_{i, l}- B^*_{i, l} \|_4
 &\leq& \| B_{i, l}- \E(B_{i, l} | {\cal F}_0^\infty) \|_4
 + \| \E(B_{i, l} | {\cal F}_0^\infty) - B^*_{i, l}\|_4 \cr
 &=& s_l O[l^{-\varphi_1} + (l/i)^{\varphi_2} ],
\end{eqnarray}
in view of Lemma \ref{lem:snS} since $\|B_{i, l} \| \sim s_l$. Again we assume without loss generality that $\varphi_2 < 1$. By Lemma \ref{lem:S6511}(i),
$\| B_{0, l} \|_4 = O(\sigma_{l, r})$. So (\ref{eqn:S20501})
similarly implies (\ref{eqn:S11501}) via
\begin{eqnarray}\label{eqn:O097331}
{\rm var} (\hat s_l^2 / s_l^2)
 &=& {{O(1)} \over n }
  \sum_{i=0}^{2l} O(1)
 + {{O(1)} \over n } \sum_{i=2l+1}^{n-1}
  O[l^{-\varphi_1} + (l/i)^{\varphi_2}] \cr
  &=& O(l/n) + O(l^{-\varphi_1})\cr
  + O ( (l/n)^{\varphi_2} )
  &=& O(n^{-\phi})
\end{eqnarray}
with $\phi = \min(1-r_0, \, \varphi_1 r_0, \, (1-r_0) \varphi_2)$
since $l \asymp n^{r_0}$, $0 < r_0 < 1$.

Now we shall show that (\ref{eqn:S11501}) implies
(\ref{eqn:O24801}). By Lemma \ref{lem:S6511}(i) and the
Cauchy-Schwarz inequality,
\begin{eqnarray}\label{eqn:O097591}
\| \hat Q_{n, l} - \tilde Q_{n, l} \|
 & = & \left\|n (l \bar Y_n)^2 -2 l  \bar Y_n \sum_{i=1}^n B_{i, l}\right\| \cr
 &\leq& n l^2 \|\bar Y_n^2\| + \|2l \bar Y_n \|_4 l \| Y_1 + \cdots + Y_n \|_4 \cr
 & = & O(l^2 s_n^2 / n) \cr
 &= & n s_l^2  O( l^2 s_n^2 / (n^2 s_l^2)) \cr
 & = & n s_l^2 O [(l/n)^{2-2 H} \ell^{2p}(n) / \ell^{2p}(l)] \cr
 & = & n s_l^2 O (n^{-\theta}),
\end{eqnarray}
where $0 < \theta < (2-2H)(1-r_0)$. Hence (\ref{eqn:O24801}) follows from Lemma \ref{lem:snS}.

For (iii), by (\ref{eqn:prdmV}) and (\ref{eqn:54401}), under $p(2\beta - 1) > 1$, for $0 < \varphi_3 < p (2\beta-1)$, the predictive dependence measure
\begin{eqnarray*}
\eta_{i, 4} &:=& \| {\cal P}_0 Y_i \|_4 = \|{\cal P}_0 (L_{n, p} + \kappa_p U_{n, p}) \|_4 \leq |\kappa_p| \|{\cal P}_0 U_{n, p} \|_4 + \|{\cal P}_0 L_{n, p} \|_4 \cr
 & = & O(a_n A_n^{(p-1)/2}) + a_n O(a_n + A^{1/2}_{n+1}(4) + A_{n+1}^{p/2}) \cr
 & = & O(i^{-1-\varphi_3}),
\end{eqnarray*}
where $L_{n,p}$ is defined in (\ref{eqn:lq110}). Recall the proof of Theorem \ref{thm:L1}(ii) for the definition of $G_N$, $N > 3 l$. By (\ref{eqn:DMIM}), $\| G_N \|^2_4 \le C_4 \sum_{k=-\infty}^N \| {\cal P}_k G_N \|_4^2$, and the arguments in (\ref{eqn:O3341}) and (\ref{eqn:O3471}), there exists a constant $C > 0$ such that
\begin{equation*}
{\| G_N \|_4^2 \over l} \le C \eta_{*, 4} \sum_{i=0}^\infty \min(\eta_{i, 4}, \, \tau^*_{N-l+1, 4}),
\end{equation*}
where $\eta_{*,4} = \sum_{i=0}^\infty \eta_{i, 4}$ and $\tau^*_{n,
4} = \max_{m \ge n} \tau_{m, 4}$. As $\tau \to 0$, we have
$\sum_{i=0}^\infty \min(\eta_{i, 4},\,\tau) =O(\tau^{\varphi_4})$,
where $\varphi_4 = \varphi_3 / (1+ \varphi_3)$. Similarly as
(\ref{eqn:S20501}), (\ref{eqn:O097301}) and (\ref{eqn:O097331}),
\begin{eqnarray*}
{\rm var} (\hat s_l^2 / s_l^2) & = &
 {{O(1)} \over n}  \sum_{i=0}^{n-1} {\| G_i \|_4 \over \sqrt l} \cr
 &=& {O(1) \over n} \sum_{i=1+3 l}^{n-1} {\| G_i \|_4 \over \sqrt l} + O(l/n) \cr
 & = & {{O(1)} \over n} \sum_{i=1+3 l}^{n-1} i^{-\phi_1 \varphi_4/2} + O(l/n) \cr
 & = & O(n^{-\phi_1 \varphi_4/2}) + O(l/n).
\end{eqnarray*}
So (\ref{eqn:S11501}), and hence (\ref{eqn:O24801}) follows in view of (\ref{eqn:O097591}).

For (ii), as in the proof Theorem \ref{thm:L1}(ii), it follows from the Lebesgue dominated convergence theorem since $\tau^*_{m, 4} \to 0$ as $m \to \infty$.
\end{proof}

\subsection{Estimation of $H$}\label{subsec:Hhat}
In the study of self-similar or long-memory
processes, a fundamental problem is to estimate $H$, the
self-similar parameter. The latter problem has been extensively
studied in the literature. The approach of spectral estimation
which uses periodograms to estimate $H$ has been considered, for
example, by Robinson (1994, 1995a and 1995b) and Moulines and Soulier
(1999). To extend the case where the underlying process is or
close to linear, Hurvich, Moulines and Soulier (2005) deals with a
nonlinear model widely used in econometrics which contains a
latent long-memory volatility component. Taking the time-domain
approach, Teverovsky and Taqqu (1997) and Giraitis, Robinson and
Surgalis (1999) focus on a variance--type estimator for $H$. Here
we shall estimate $H$ based on $\sigma_{l, p}$ by using a two-time-scale method to estimate $H$. By Lemma \ref{lem:snS},
\begin{equation*}
\lim_{l \to \infty} {s_{2l} \over s_{l}} = \lim_{l \to \infty} {\sigma_{2l, p} \over \sigma_{l, p}} = 2^H.
\end{equation*}
Based on Theorem \ref{eqn:Sgm2}, we can estimate $H$ by
\begin{equation*}
\hat H = { {\log \hat s_{2l} - \log \hat s_{l} } \over \log 2}.
\end{equation*}
Corollary \ref{cor:H} asserts that $\hat H$ is a consistent
estimate of $H$. To obtain a convergence rate, we need to impose
regularity conditions on the slowly varying function $\ell(
\cdot)$. The estimation of slowly varying functions is a highly
non-trivial problem. In estimating $\sigma_n$ in the context of
linear processes or nonlinear functionals of Gaussian processes, Hall, Jing and Lahiri (1998) and Nordman and Lahiri (2005) imposed some
conditions on $\ell$. In our setting, for the sake of readability,
we assume that $\ell(n) \to c_0$, though our argument can be
generalized to deal with other $\ell$ with some tedious
calculations. Under Condition \ref{cond:1}, by Lemma
\ref{lem:S096311}(iii), $\sigma_{l, p} / (l^H c_0) = 1 + O(
l^{-\varphi_2})$. So we estimate $s_n$ by
\begin{equation*}
\hat \sigma_{n, p} = n^{\hat H} \hat c_0, \mbox{ where }
  \hat c_0 = { {\hat \sigma_{l, p} } \over {l^{\hat H}}}.
\end{equation*}
In practice we can choose $l = \lfloor c n^{1/2} \rfloor$ for some $0 < c < \infty$. The problem of choosing an optimal data-driven $l$ is beyond the scope of the current paper.

\begin{condition}\label{cond:1}
The coefficients $a_0 \not= 0$, $a_j = c_j j^{-\beta}$, $j \geq 1$, where $1/2 < \beta < 1$ and $c_j = c + O(j^{-\phi})$ for some $\phi > 0$.
\end{condition}

Condition \ref{cond:1} is satisfied by the popular FARIMA processes.

\begin{corol}
\label{cor:H} Assume that $l \asymp n^{r_0}$, $0 < r_0 < 1$, and
Condition \ref{cond:2} holds with $\nu = 4$. (i) Under either $p (
2 \beta - 1) < 1$ or $p ( 2 \beta - 1) > 1$, we have $\lim_{n \to
\infty} \hat H = H$. (ii) Assume $p ( 2 \beta - 1) < 1$ and
Condition \ref{cond:1}. Then
\begin{eqnarray}\label{eqn:S9441}
\hat H - H = O(n^{-\phi})
\end{eqnarray}
and
\begin{eqnarray}\label{eqn:S96111}
{ {\hat s_{n}} \over {s_{n}}} \to 1 \mbox{ in probability.}
\end{eqnarray}
(iii) Under conditions of Theorem \ref{eqn:Sgm2}(iii), we have (\ref{eqn:S9441}) with $H = 1/2$ and (\ref{eqn:S96111}).
\end{corol}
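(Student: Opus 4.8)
The plan is to reduce the entire corollary to two ingredients already established in the excerpt: the multiplicative consistency of the scale estimates, with rates, from Theorem \ref{eqn:Sgm2}, and the deterministic ratio asymptotics $s_{2l}/s_l \to 2^H$ from Lemma \ref{lem:snS}. The organizing device is to pass to logarithms. Writing
\begin{equation*}
\hat H - H = \frac{1}{\log 2}\left[\log\frac{\hat s_{2l}}{s_{2l}} - \log\frac{\hat s_{l}}{s_{l}} + \left(\log\frac{s_{2l}}{s_{l}} - H\log 2\right)\right],
\end{equation*}
I separate a \emph{random} part (the first two logarithms, governed by the variance estimates) from a purely \emph{deterministic} bias (the parenthesis, governed by the regular variation of $s_l$). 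Since $2l \asymp n^{r_0}$ whenever $l \asymp n^{r_0}$, every statement of Theorem \ref{eqn:Sgm2} applies verbatim at both scales $l$ and $2l$.

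For parts (i) and (ii) I treat the two pieces separately. Because $\hat s_l^2$ is unbiased for $s_l^2$ and, by Theorem \ref{eqn:Sgm2}, ${\rm var}(\hat s_l^2/s_l^2)\to 0$, Chebyshev's inequality gives $\hat s_l/s_l\to 1$ in probability, and likewise $\hat s_{2l}/s_{2l}\to 1$, so both random logarithms vanish in probability; the deterministic bias vanishes since $s_{2l}/s_l\to 2^H$ by Lemma \ref{lem:snS}, proving $\hat H\to H$. For the rate in (ii) I quantify each piece: the bound ${\rm var}(\hat s_l^2/s_l^2)=O(n^{-\phi})$ with unbiasedness yields $\hat s_l^2/s_l^2-1=O_\p(n^{-\phi/2})$, so each random logarithm is $O_\p(n^{-\phi/2})$; and Condition \ref{cond:1} with Lemma \ref{lem:S096311}(iii) gives $\sigma_{l,p}/(l^H c_0)=1+O(l^{-\varphi_2})$, which combined with $s_l\sim\sigma_{l,p}$ forces $s_{2l}/s_l=2^H(1+O(l^{-\varphi_2}))$, so the bias is $O(l^{-\varphi_2})=O(n^{-r_0\varphi_2})$. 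Taking the smaller exponent delivers (\ref{eqn:S9441}).

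For (\ref{eqn:S96111}) I use the definition $\hat c_0=\hat\sigma_{l,p}/l^{\hat H}$, which gives $\hat\sigma_{n,p}=\hat\sigma_{l,p}(n/l)^{\hat H}$ and hence
\begin{equation*}
\frac{\hat s_n}{s_n} = \frac{\hat\sigma_{l,p}}{s_l}\cdot\frac{s_l/l^H}{s_n/n^H}\cdot (n/l)^{\hat H - H}.
\end{equation*}
The first factor tends to $1$ in probability as above; the second tends to $1$ because $s_m/m^H\to c_0^p\kappa_p\|Z_{p,\beta}(1)\|$ under Condition \ref{cond:1}; and the third equals $\exp\{(\hat H-H)\log(n/l)\}$, where $(\hat H-H)\log(n/l)=O_\p(n^{-\phi}\log n)\to 0$ by (\ref{eqn:S9441}) and $l\asymp n^{r_0}$. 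Thus $\hat s_n/s_n\to 1$ in probability, the key point being that the polynomial rate on $\hat H-H$ is fast enough to absorb the $\log(n/l)$ factor.

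For part (iii) the same decomposition applies with $H=1/2$, the value forced by $s_l\sim\|D_0\|\sqrt l$ in Lemma \ref{lem:snS}; the random logarithms are $O_\p(n^{-\phi/2})$ by Theorem \ref{eqn:Sgm2}(iii). The only genuinely new work is the deterministic bias, for which Lemma \ref{lem:snS} supplies the limit $s_l\sim\|D_0\|\sqrt l$ but no rate. I would derive a rate from the hypothesis $\tau_{n,4}=O(n^{-\phi_1})$: as in the proof of Theorem \ref{eqn:Sgm2}(iii) this gives $\eta_{i,4}=O(i^{-1-\varphi_3})$, hence a polynomially decaying autocovariance $\gamma^Y_i={\rm cov}(Y_0,Y_i)$ via the orthogonal expansion $\gamma^Y_i=\sum_k\E({\cal P}_k Y_0\,{\cal P}_k Y_i)$; writing $s_l^2/l=\sum_{|i|<l}\gamma^Y_i-l^{-1}\sum_{|i|<l}|i|\gamma^Y_i$ and bounding the tail and the weighted sum then yields $s_l^2/l=\|D_0\|^2+O(l^{-\varphi_3})$, so $s_{2l}/s_l=2^{1/2}(1+O(l^{-\varphi_3}))$ and the bias is $O(n^{-r_0\varphi_3})$; (\ref{eqn:S96111}) follows exactly as in (ii). I expect this last translation, from the predictive/physical dependence rate to a covariance tail rate in the Gaussian regime, to be the main obstacle; everything else is routine propagation of multiplicative errors through the logarithm and through the power $(n/l)^{\hat H-H}$.
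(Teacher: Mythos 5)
Your proposal is correct, and for parts (i) and (ii) it is essentially the paper's argument in a different dress: the paper manipulates the ratio $\tilde s_{2l}^2/\tilde s_l^2 = \varpi_{2l}^2/\varpi_l^2 + o_\p(1) = 2^{2H}+o_\p(1)$ directly, while you pass to logarithms and split $\hat H - H$ into a stochastic part (controlled by Theorem \ref{eqn:Sgm2} plus unbiasedness and Chebyshev) and a deterministic bias (controlled by Lemma \ref{lem:snS} and Lemma \ref{lem:S096311}(iii)); the rate bookkeeping $O_\p(n^{-\phi/2})+O(n^{-r_0\varphi_2})$ and the treatment of (\ref{eqn:S96111}) via $(n/l)^{\hat H-H}=\exp\{(\hat H-H)\log(n/l)\}$ with the polynomial rate absorbing the logarithm are exactly what the paper does. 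The genuine divergence is in part (iii), where the rate $s_l/\sigma_l-1=O(l^{-\varphi_3})$ must be produced. The paper gets it by citing Theorem 1 of Wu (2007), i.e.\ an ${\cal L}^4$ martingale approximation $\|S_l-\sum_{i=1}^l D_i\|_4^2=\sum_{i=1}^l O(\Theta_i^2)$ with $\Theta_i=O(i^{-\varphi_3})$. You instead convert the predictive rate $\eta_{i,4}=O(i^{-1-\varphi_3})$ into a covariance tail bound $|{\rm cov}(Y_0,Y_i)|\le\sum_{m\ge 0}\eta_m\eta_{i+m}=O(i^{-1-\varphi_3})$ via the orthogonal expansion over projections, and then expand $s_l^2/l=\sum_{|i|<l}\gamma^Y_i-l^{-1}\sum_{|i|<l}|i|\gamma^Y_i$ around the long-run variance $\|D_0\|^2=\sum_i\gamma^Y_i$; both the tail $\sum_{|i|\ge l}|\gamma^Y_i|$ and the weighted sum are $O(l^{-\varphi_3})$ (for $\varphi_3<1$), giving the same rate. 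Your route is more elementary and self-contained (no external invariance principle needed, only second moments), at the cost of a slightly longer computation; the paper's route buys brevity and an ${\cal L}^4$ approximation that is stronger than what this step actually requires.
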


\begin{proof}
For (i), by Theorem \ref{eqn:Sgm2}(i, ii) and Lemma
\ref{lem:snS}, we have $\E| \tilde s_l^2 / \varpi_l^2 - 1|^2
\to 0$, where $\varpi_l = \sigma_{l, p}$ if $p ( 2 \beta - 1) < 1$ and $\varpi_l = \sigma_{l}$ if $p ( 2 \beta - 1) > 1$. Hence
$\tilde s_l^2 / \varpi_l^2 = 1 + o_\p(1)$ and $\tilde s_{2 l}^2 /
\tilde s_l^2 = \varpi_{2 l}^2/ \varpi_l^2 + o_\p(1) = 2^{2 H} +
o_\p(1)$. Thus $\lim_{n \to \infty} \hat H = H$.

For (ii), under Condition \ref{cond:1}, we have $s_l^2 / \sigma_{l,p}^2
= 1 + O(n^{-\phi})$, which by Theorem \ref{eqn:Sgm2}(i)
implies that $\tilde s_l / \sigma_{l,p} = 1 + O_\p(n^{-\phi})$ and
hence $\tilde s_{2l}^2/\tilde s_l^2 = 2^{2 H} + O_\p(n^{-\phi})$.
So (\ref{eqn:S9441}) follows. For (\ref{eqn:S96111}), by
(\ref{eqn:S9441}), we have $l^{\hat H} / l^H = 1 + O(n^{-\phi r_0}
\log n)$. Hence for some $\phi_4 > 0$, we have $\hat c_0 / c_0 = 1
+ O(n^{-\phi_4})$, which entails (\ref{eqn:S96111}) in view of
$n^{\hat H} / n^H = 1 + O(n^{-\phi_1} \log n)$.

For (iii), let $D_k = \sum_{i=k}^\infty {\cal P}_k Y_i$. Recall from
the proof of Theorem \ref{eqn:Sgm2}(iii) that $\eta_{i, 4} = \|
{\cal P}_0 Y_i \|_4 = O(i^{-1-\varphi_3})$. By Theorem 1 in Wu
(2007), $\|S_l - \sum_{i=1}^l D_i\|^2_4 = \sum_{i=1}^l
O(\Theta^2_i)$, where $\Theta_i = \sum_{j=i}^\infty \eta_{i,4} =
O(i^{-\varphi_3})$. Hence $\|S_l - \sum_{i=1}^l D_i\| / \sqrt l =
O(l^{-\varphi_3})$, which implies that $s_l / \sigma_l -1 =
O(l^{-\varphi_3})$. Then the result follows from the argument in
(ii) and Theorem \ref{eqn:Sgm2}(iii).
\end{proof}

\section{A Subsampling Approach}\label{sec:S19950}
The block sampling method in Section \ref{subsec:blocksampling} requires consistent estimation of $s_l$ and $s_n$. The former is treated in Section \ref{subsec:sltilde}, while the latter is achieved by estimating the self-similar parameter $H$; see Section \ref{subsec:Hhat}. Here we shall propose a subsampling method which can directly estimate the distribution of $S_n$ without having to estimate $H$. To this end, we choose positive integers $n_1$ and $l_1$ such that
\begin{equation}\label{eqn:S91022}
{l_1 \over n_1} = {l \over n}, \mbox{ and } {1 \over l_1} + {n_1+l \over n} = O(n^{-\theta}) \mbox{ for some } \theta > 0.
\end{equation}
Further assume that $\ell(\cdot)$ is {\it strongly slowly varying} in the sense that $\lim_{k \to \infty} \ell(k) / \ell(k^\alpha) =
1$ for any $\alpha > 0$. It holds for functions like $\ell(k) = (\log \log k)^c$, $c \in \R$, while the slowly varying function $\ell(k) = \log k$ is not strongly slowly varying. Similar conditions were also used in Hall, Jing and Lahiri (1998) and Nordman and Lahiri (2005). Note that (\ref{eqn:S91022}) implies that
\begin{equation}\label{eqn:S10838}
\lim_{n \to \infty} {s_{l_1} s_n \over s_{l} s_{n_1}} = 1.
\end{equation}
Then by Theorem \ref{thm:Wu06} and condition (\ref{eqn:S91022}), we have
\begin{eqnarray}\label{eqn:subsampling}
& & \sup_{u \in \mathbb R} |\mathbb P(S_n/s_{n_1} \leq u)
 - \mathbb P(S_l/s_{l_1} \le u)| \cr
& & = \sup_{x \in \mathbb R} |\mathbb P((S_n/s_{n_1}) (s_{n_1}/s_n) \leq x)
 - \mathbb P((S_l/s_{l_1}) (s_{n_1}/s_n) \leq x)| \to 0.
\end{eqnarray}
Hence, the distribution of $S_n/s_{n_1}$ can be approximated by that of $S_l/s_{l_1}$. Let
\begin{equation}
\tilde F_l^\star(x) = {1 \over n} \sum_{i=1}^{n-l+1} \mathbf{1}_{\{(\sum_{j=i}^{i+l-1} Y_j - l\bar Y_n)/\tilde s_{l_1,i} \leq x\}},
\end{equation}
where
\begin{equation}\label{eqn:sl1itilde}
\tilde s_{l_1,i}^2 = { {\tilde Q_{l,l_1,i}} \over l-l_1+1},
 \mbox{ with } \tilde Q_{l,l_1,i} = \sum_{j=1}^{l-l_1+1}
  |Y_{i+j-1} + \cdots + Y_{i+j+l_1-2} - l_1 \bar Y_n|^2.
\end{equation}
Since $\lim_{n \to \infty} \tilde s_{l_1,i}^2 / s^2_{l_1} = 1$, using the argument in Theorem \ref{thm:L1}, we have
\begin{equation}
\sup_x |\tilde F_l^\star(x) - \mathbb P(S_l/s_{l_1} \leq x) |
 \to 0 \mbox{ in probability.}
\end{equation}
Note that  $s_{n_1}$ can be estimated by (\ref{eqn:Q1}). Then confidence intervals for $\mu$ can be constructed based on sample quantiles of $\tilde F_l^\star(\cdot)$.

\section{Simulation Study}\label{sec:simu}
Consider a stationary process $Y_i = K(X_i)$, where $X_i$ is a linear process defined in (\ref{eqn:Xidef}) with $a_k = (1+k)^{-\beta}$, $k \ge 0$, and $\varepsilon_i$, $i \in \mathbb Z$, are iid innovations. We shall here investigate the finite-sample performance of the block sampling method described in Section \ref{sec:mainsec} (based on $\hat H$) and \ref{sec:S19950} (based on subsampling) by considering different choices of the transform $K(\cdot)$, the beta index $\beta$, the sample size $n$ and innovation distributions. In particular, we consider the following four processes:
\begin{itemize}
\item[(i)] $K(x) = x$, and $\epsilon_i$, $i \in \mathbb Z$, are iid $N(0,1)$;
\item[(ii)] $K(x) = \boldsymbol 1_{\{x \leq 1\}}$, and $\epsilon_i$, $i \in \mathbb Z$, are iid $t_7$;
\item[(iii)] $K(x) = \boldsymbol 1_{\{x \leq 0\}}$, and $\epsilon_i$, $i \in \mathbb Z$, are iid $t_7$;
\item[(iv)] $K(x) = x^2$, and $\epsilon_i$, $i \in \mathbb Z$, are iid Rademacher.
\end{itemize}
For cases (i) and (ii), the power rank $p = 1$, while for (iii) and (iv), the power rank $p = 2$. If $p = 1$, we let $\beta = 0.75$ and $\beta = 2$, which correspond to long- and short-range dependent processes, respectively. For $p = 2$, we consider three cases: $\beta \in \{0.6,0.8,2\}$. The first two are situations of long-range dependence but have different limiting distributions as indicated in Theorems \ref{thm:Wu06} and \ref{thm:L1}. We use block sizes $l = \lfloor c n^{0.5} \rfloor$, $c \in \{0.5,1,2\}$, and $n_1 = \lfloor n^{0.9} \rfloor$. Let $n \in \{100, 500, 1000\}$. The empirical coverage probabilities of lower and upper one-sided 90\% confidence intervals are computed based on $5,000$ realizations and they are summarized in Table \ref{tab:Simulation} as pairs in parentheses. We observe the following phenomena. First, the accuracy of the coverage probabilities generally improves as we increase $n$, or decrease the strength of dependence (increasing the beta index $\beta$). Second, the nonlinearity worsens the accuracy, noting that the processes in (ii)--(iv) are nonlinear while the one in  (i) is linear. Lastly, the subsampling-based procedure described in Section \ref{sec:S19950} usually has a better accuracy than the one based on $\hat H$ as described in Section \ref{sec:mainsec}.

\begin{table}\footnotesize
\centering
\begin{tabular}{ccccccccc}
\hline \hline
 & & \multicolumn{3}{c}{$\hat H$-based} & & \multicolumn{3}{c}{Subsampling-based} \\ \cline{3-5} \cline{7-9}
$\beta$ & $n$ & $c = 0.5$ & $c = 1$ & $c = 2$ & & $c = 0.5$ & $c = 1$ & $c = 2$ \\
\hline\vspace{-0.5em}
 & & \multicolumn{7}{c}{Model (i)} \\\vspace{-0.5em}
0.75 & \phantom{0}100 & (79.5, 80.4) & (75.2, 74.5) & (69.6, 72.2) & & (90.8, 91.8) & (87.9, 87.2) & (83.5, 84.9) \\\vspace{-0.5em}
     & \phantom{0}500 & (85.0, 85.8) & (80.8, 81.3) & (75.9, 78.7) & & (93.5, 94.2) & (91.1, 91.6) & (88.0, 89.7) \\
     & 1000 & (86.7, 87.3) & (83.5, 82.2) & (80.3, 78.2) & & (94.5, 94.2) & (92.6, 91.9) & (90.5, 89.3) \\\vspace{-0.5em}
2\phantom{.00} & \phantom{0}100 & (89.1, 89.0) & (87.4, 87.1) & (84.1, 83.2) & & (93.6, 92.5) & (89.4, 89.3) & (85.6, 85.3) \\\vspace{-0.5em}
     & \phantom{0}500 & (90.9, 90.7) & (90.3, 90.4) & (89.2, 89.1) & & (93.6, 93.0) & (91.6, 92.3) & (89.8, 90.1) \\
     & 1000 & (91.7, 91.5) & (90.1, 91.4) & (90.2, 90.7) & & (93.1, 93.4) & (91.5, 92.6) & (90.6, 90.3) \\\vspace{-0.5em}

 & & \multicolumn{7}{c}{Model (ii)} \\\vspace{-0.5em}
0.75 & \phantom{0}100 & (60.9, 85.7) & (60.5, 82.6) & (59.2, 80.7) & & (100.0, 96.8)\phantom{1} & (99.9, 94.9) & (95.8, 90.5) \\\vspace{-0.5em}
     & \phantom{0}500 & (68.8, 90.1) & (69.1, 86.2) & (68.6, 83.2) & & (100.0, 98.3)\phantom{1} & (99.8, 95.7) & (97.0, 92.1) \\
     & 1000 & (71.5, 91.9) & (72.2, 89.3) & (71.7, 84.2) & & (100.0, 97.8)\phantom{1} & (99.8, 95.9) & (96.9, 92.7) \\\vspace{-0.5em}
2\phantom{.00} & \phantom{0}100 & (75.8, 93.2) & (81.3, 89.6) & (78.9, 87.7) & & (100.0, 89.9)\phantom{1} & (99.1, 86.3) & (89.5, 84.5) \\\vspace{-0.5em}
     & \phantom{0}500 & (88.5, 92.8) & (87.5, 92.4) & (86.9, 91.1) & & (100.0, 87.3)\phantom{1} & (95.1, 88.8) & (91.8, 87.3) \\
     & 1000 & (89.1, 93.0) & (88.0, 92.1) & (88.0, 91.5) & & (98.4, 88.0) & (95.1, 88.2) & (92.0, 87.7) \\\vspace{-0.5em}

 & & \multicolumn{7}{c}{Model (iii)} \\\vspace{-0.5em}
0.6 & \phantom{0}100 & (71.2, 70.9) & (69.6, 68.6) & (67.4, 68.4) & & (99.3, 98.2) & (99.2, 98.0) & (95.6, 94.7) \\\vspace{-0.5em}
     & \phantom{0}500 & (75.9, 75.3) & (74.6, 73.4) & (70.8, 72.2) & & (100.0, 99.9)\phantom{1} & (99.7, 99.7) & (97.3, 97.8) \\
     & 1000 & (78.0, 78.3) & (76.7, 76.3) & (75.3, 72.8) & & (100.0,100.0)\phantom{1} & (99.7, 99.7) & (98.3, 97.3) \\\vspace{-0.5em}
0.8 & \phantom{0}100 & (76.2, 74.4) & (73.7, 74.3) & (71.7, 71.7) & & (100.0,100.0)\phantom{1} & (97.9, 97.6) & (91.5, 90.6) \\\vspace{-0.5em}
     & \phantom{0}500 & (82.4, 83.6) & (81.3, 79.5) & (75.7, 77.8) & & (99.7, 99.7) & (98.0, 97.3) & (93.2, 94.0) \\
     & 1000 & (85.2, 84.7) & (83.0, 82.5) & (81.9, 77.8) & & (99.4, 99.5) & (97.5, 96.9) & (95.1, 93.4) \\\vspace{-0.5em}
2\phantom{.00} & \phantom{0}100 & (88.7, 89.0) & (87.8, 86.9) & (83.9, 84.1) & & (94.8, 94.7) & (91.8, 89.8) & (86.3, 86.6) \\\vspace{-0.5em}
     & \phantom{0}500 & (90.2, 90.7) & (90.3, 89.1) & (88.9, 89.3) & & (93.8, 93.6) & (92.1, 91.7) & (89.3, 89.5) \\
     & 1000 & (91.0, 91.5) & (90.1, 90.7) & (90.2, 89.5) & & (93.3, 93.9) & (91.5, 91.3) & (90.6, 89.4) \\\vspace{-0.5em}

 & & \multicolumn{7}{c}{Model (iv)} \\\vspace{-0.5em}
0.6 & \phantom{0}100 & (98.4, 34.0) & (97.3, 30.0) & (96.1, 31.6) & & (99.2, 77.9) & (98.5, 71.4) & (97.9, 65.4) \\\vspace{-0.5em}
     & \phantom{0}500 & (99.0, 42.6) & (97.7, 42.5) & (97.3, 36.4) & & (99.1, 88.6) & (98.4, 86.2) & (98.3, 77.0) \\
     & 1000 & (99.1, 48.2) & (98.2, 44.4) & (97.3, 42.5) & & (99.1, 91.9) & (98.9, 86.9) & (98.1, 82.1) \\\vspace{-0.5em}
0.8 & \phantom{0}100 & (96.5, 56.7) & (94.7, 56.8) & (92.9, 55.5) & & (97.4, 88.1) & (95.6, 85.0) & (93.3, 78.5) \\\vspace{-0.5em}
     & \phantom{0}500 & (97.9, 67.7) & (96.5, 65.8) & (94.4, 66.1) & & (96.6, 95.2) & (95.6, 93.4) & (93.1, 88.3) \\
     & 1000 & (98.2, 72.3) & (96.6, 68.3) & (95.0, 70.0) & & (95.6, 96.8) & (94.6, 93.9) & (93.2, 91.5) \\\vspace{-0.5em}
2\phantom{.00} & \phantom{0}100 & (94.8, 82.1) & (92.7, 81.9) & (88.4, 80.5) & & (86.6, 90.7) & (86.7, 89.1) & (84.0, 86.0) \\\vspace{-0.5em}
     & \phantom{0}500 & (94.2, 86.5) & (93.5, 87.5) & (91.9, 85.6) & & (85.7, 94.7) & (86.7, 93.6) & (86.1, 90.9) \\
     & 1000 & (94.2, 87.0) & (93.5, 87.9) & (91.7, 87.7) & & (87.3, 94.8) & (86.5, 93.6) & (86.2, 90.9) \\
\hline
\end{tabular}
\caption{Empirical coverage probabilities of lower and upper (paired in parentheses) one-sided 90\% confidence intervals for processes (i)--(iv) with different combinations of beta index $\beta$, sample size $n$ and block size $l = \lfloor c n^{0.5} \rfloor$.}\label{tab:Simulation}
\end{table}

\section{Appendix}\label{sec:proofs}
Recall that ${\cal F}_i^j = (\varepsilon_i,\varepsilon_{i+1}, \ldots, \varepsilon_j)$, $i \leq j$, ${\cal F}_i^\infty = (\varepsilon_i,\varepsilon_{i+1}, \ldots)$ and ${\cal F}_{-\infty}^j= (\ldots,\varepsilon_{j-1}, \varepsilon_j)$. In dealing with nonlinear functionals of linear processes, we will use the powerful tool of Volterra expansion (Ho and Hsing, 1997 and Wu, 2006). Define
\begin{equation}
\label{eqn:lq110}
 L_{n, p} = K(X_n) - \sum_{r=0}^p \kappa_r U_{n,r},
\end{equation}
where we recall $\kappa_r = K^{(r)}_\infty(0)$, and $U_{n,r}$ is
the Volterra process
\begin{equation}
\label{eqn:lq11}
 U_{n, r} = \sum_{0 \le j_1 < \ldots < j_r < \infty}
 \prod_{s=1}^r a_{j_s} \varepsilon_{n-j_s}.
\end{equation}
We can view $L_{n, p}$ as the remainder of the $p$-th order
Volterra expansion of $K(X_n)$. Note that $\kappa_r = 0$ if $1\le
r < p$. In the special case of Gaussian processes, $L_{n, p}$ is
closely related to the Hermite expansion. In Lemma
\ref{lem:prdmcexp} we compute the predictive dependence measures
for the Volterra process $U_{n, r}$ and for $Y_n = K(X_n)$.

\begin{lemma}
\label{lem:prdmcexp} Let $\nu \ge 2$, $r\geq1$ and assume $\varepsilon_i \in
{\cal L}^\nu$. Let $A_n = \sum_{j=n}^\infty a_j^2$. Then
\begin{eqnarray}
\label{eqn:prdmV}
 \|{\cal P}_0 U_{n, r} \|^2_\nu = O(a_n^2 A_n^{r-1}).
\end{eqnarray}
\end{lemma}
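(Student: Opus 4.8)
The plan is to compute ${\cal P}_0 U_{n,r}$ exactly, reduce the bound to an $L^\nu$ estimate for a Volterra form of one lower order, and then estimate the coefficient sum. First I would determine which multi-indices survive the projection. Since the $\varepsilon_j$ are independent with mean zero, conditioning on ${\cal F}_{-\infty}^0$ (resp. ${\cal F}_{-\infty}^{-1}$) replaces any factor $\varepsilon_{n-j_s}$ whose index satisfies $n-j_s>0$ (resp. $n-j_s\ge 0$) by its mean $0$, and leaves the remaining factors unchanged. Thus for a fixed multi-index $0\le j_1<\cdots<j_r$ the two conditional expectations of $\prod_{s}\varepsilon_{n-j_s}$ differ only when all $j_s\ge n$ while the smallest index equals $n$, i.e. $j_1=n$ and $j_2,\ldots,j_r>n$. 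Consequently
\begin{equation*}
{\cal P}_0 U_{n,r}=a_n\,\varepsilon_0\,W,\qquad
 W:=\sum_{n<j_2<\cdots<j_r<\infty}\ \prod_{s=2}^r a_{j_s}\varepsilon_{n-j_s}.
\end{equation*}
Here $W$ is a Volterra form of order $r-1$ in the innovations $\varepsilon_{n-j_s}$ with strictly negative indices, hence ${\cal F}_{-\infty}^{-1}$-measurable and independent of $\varepsilon_0$. By independence, $\|{\cal P}_0 U_{n,r}\|_\nu=a_n\,\|\varepsilon_0\|_\nu\,\|W\|_\nu$.

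The crux, and the step I expect to be the main obstacle, is controlling $\|W\|_\nu$ for $\nu>2$. For $\nu=2$ orthogonality suffices, giving $\|W\|^2=\|\varepsilon_1\|^{2(r-1)}\sum_{n<j_2<\cdots<j_r}\prod_{s=2}^r a_{j_s}^2$. For general $\nu\ge 2$ I would prove the multilinear moment bound $\|W\|_\nu\le C_{r,\nu}\big(\sum_{j>n}a_j^2\big)^{(r-1)/2}$ by induction on the order $m=r-1$ of the form, using Burkholder's inequality. The induction step decomposes a generic order-$m$ form $T_m=\sum_{i_1<\cdots<i_m}\prod_s b_{i_s}\xi_{i_s}$ by its largest index: writing $T_m=\sum_k b_k\xi_k T_{m-1}^{(k)}$, where $T_{m-1}^{(k)}$ involves only innovations with index $<k$, the summands form martingale differences relative to the natural filtration. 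Burkholder's inequality gives $\|T_m\|_\nu\le C_\nu\|\sum_k b_k^2\xi_k^2 (T_{m-1}^{(k)})^2\|_{\nu/2}^{1/2}$; applying the triangle inequality in $L^{\nu/2}$, the independence of $\xi_k$ from $T_{m-1}^{(k)}$, and the induction hypothesis yields a bound involving $\sum_k b_k^2\big(\sum_{i<k}b_i^2\big)^{m-1}$, which by the elementary comparison $\sum_k b_k^2(\sum_{i<k}b_i^2)^{m-1}\le \tfrac1m(\sum_i b_i^2)^{m}$ closes the recursion.

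Applying this with $b_j=a_j$, $j>n$, and order $m=r-1$ gives $\|W\|_\nu\le C_{r,\nu}\,A_{n+1}^{(r-1)/2}$. Finally I would estimate the coefficient sum directly,
\begin{equation*}
\sum_{n<j_2<\cdots<j_r}\ \prod_{s=2}^r a_{j_s}^2
 \le \frac{1}{(r-1)!}\Big(\sum_{j>n}a_j^2\Big)^{r-1}
 =\frac{A_{n+1}^{r-1}}{(r-1)!}\le \frac{A_n^{r-1}}{(r-1)!},
\end{equation*}
using $A_{n+1}\le A_n$. Combining the factorization with the bound on $\|W\|_\nu$ gives
\begin{equation*}
\|{\cal P}_0 U_{n,r}\|_\nu^2=a_n^2\,\|\varepsilon_0\|_\nu^2\,\|W\|_\nu^2=O\big(a_n^2 A_n^{r-1}\big),
\end{equation*}
which is the claim. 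The case $r=1$ is the trivial base (only $j_1=n$ survives, $W\equiv 1$), and the power-counting matches $A_n^{0}$. Everything outside the multilinear Burkholder induction is routine bookkeeping.
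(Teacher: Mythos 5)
Your proposal is correct and follows essentially the same route as the paper: both identify ${\cal P}_0 U_{n,r}=a_n\varepsilon_0\times(\text{an order-}(r-1)\text{ Volterra form over indices }>n)$ and then bound that form by induction on the order, decomposing by the largest innovation index and applying Burkholder plus the triangle inequality in $L^{\nu/2}$ (the paper's inequality (\ref{eqn:DMIM})). The paper phrases the induction as a bound on $\|\E(U_{n,r}\mid{\cal F}_0)\|_\nu^2=O(A_n^r)$ using stationarity, while you state it as a general multilinear moment bound closed by the elementary comparison $\sum_k b_k^2(\sum_{i<k}b_i^2)^{m-1}\le(\sum_i b_i^2)^m/m$; this is only a cosmetic difference.
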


\begin{proof}
Let $D_i, i \in \Z$, be a sequence of martingale
differences with $D_i \in {\cal L}^\nu$. By the Burkholder and the
Minkowski inequalities, there exists a constant $C_\nu$ which only
depends on $\nu$ such that, for all $m\ge 1$, we have
\begin{eqnarray}
\label{eqn:DMIM} \|D_1 + \ldots + D_m \|^2_\nu \le C_\nu
 (\|D_1 \|^2_\nu + \ldots + \| D_m \|^2_\nu).
\end{eqnarray}
We now apply the induction argument and show that, for all $r \ge
1$,
\begin{eqnarray}\label{eqn:Unr}
 \|\E (U_{n, r} | {\cal F}_0) \|^2_\nu = O(A_n^{r}).
\end{eqnarray}
Clearly (\ref{eqn:Unr}) holds with $r = 1$. By (\ref{eqn:DMIM}),
\begin{eqnarray*}
 \|\E (U_{n, r+1} | {\cal F}_0) \|^2_\nu
 &=&  \left\| \sum_{i_1 = -\infty}^0
  a_{n-i_1} \varepsilon_{i_1}
  \sum_{i_{r+1} < \ldots < i_2 < i_1}
  \prod_{k=2}^{r+1} a_{n-i_k} \varepsilon_{i_k} \right\|_\nu^2 \cr
 &\le& C_\nu \sum_{i_1 = -\infty}^0
        a^2_{n-i_1} \| \varepsilon_{i_1} \|_\nu^2
     \left\| \sum_{i_{r+1} < \ldots < i_2 < i_1}
  \prod_{k=2}^{r+1} a_{n-i_k} \varepsilon_{i_k} \right\|_\nu^2 \cr
 &=& C_\nu \| \varepsilon_0 \|_\nu^2
 \sum_{i_1 = -\infty}^{-1}
        a^2_{n-i_1} \|\E (U_{n, r} | {\cal F}_{i_1}) \|^2_\nu.
\end{eqnarray*}
By stationarity, $\|\E (U_{n, r} | {\cal F}_{i_1}) \|^2_\nu = \|\E
(U_{n-i_1, r} | {\cal F}_0) \|^2_\nu$. Then, by the induction hypothesis,
\begin{eqnarray*}
 \|\E (U_{n, r+1} | {\cal F}_0) \|^2_\nu
 \le C_\nu \| \varepsilon_0 \|_\nu^2
 \sum_{i_1 = -\infty}^0 a^2_{n-i_1} O(A_{n-i_1}^r)
 = O(A_n^{r+1}).
\end{eqnarray*}
Hence (\ref{eqn:Unr}) holds for all $r \ge 1$. By independence,
${\cal P}_0 U_{n, r} = a_n \varepsilon_0 \E(U_{n, r-1} | {\cal
F}_{-1})$, which implies (\ref{eqn:prdmV}) by (\ref{eqn:Unr}).
\end{proof}

\begin{lemma}
\label{lem:S096311} Assume  $r\in\N$, $r (2\beta-1) < 1$, and
$\varepsilon_i \in {\cal L}^\nu$, $\nu \ge 2$. Let $T_{n, r} =
\sum_{i=1}^n U_{i, r}$.
\begin{enumerate}
\renewcommand{\labelenumi}{(\roman{enumi})}
\item Let $(c_i)_{i\in\N}$ be a real valued sequence. Then
\begin{equation*}
\left\|\sum_{i=1}^{n}c_iU_{i,r}\right\|_\nu^2
 =O\left(n^{1-r(2\beta-1)}\ell^{2r}(n)
 \sum_{i=1}^{n}c_i^2\right).
\end{equation*}
\item Assume that  $n \le N$ and
$\varphi\in(0,\beta-1/2)$. Then
\begin{equation*}
 { {\|T_{n, r} - \E(T_{n, r} | {\cal F}_{-N}^\infty) \|_\nu}
 \over {n^{1- r(\beta-1/2)} \ell^r(n)}}
 = O[(n/N)^\varphi].
\end{equation*}
\item If additionally Condition \ref{cond:1} holds, we have for
some $\varphi_2 > 0$
\begin{eqnarray}
 \label{eqn:S295381}
 { {\|T_{n, r} \|^2} \over {n^{2- r(2\beta -1)} c^{2r}
  \left\|Z_{r,\beta}(1)\right\|^2
  \left\|\varepsilon_1\right\|^{2r}}  }
 = 1 + O( n^{-\varphi_2}).
\end{eqnarray}
\end{enumerate}
\end{lemma}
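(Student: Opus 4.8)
The plan is to exploit that each Volterra process $U_{i,r}$ in (\ref{eqn:lq11}) is a homogeneous chaos of order $r$ built from \emph{distinct} innovation indices, so that any $L^\nu$ bound reduces to an $L^2$ computation. Writing $\sum_{i=1}^n c_i U_{i,r}=\sum_{k_1>\cdots>k_r}C(k_1,\dots,k_r)\,\varepsilon_{k_1}\cdots\varepsilon_{k_r}$ with $C(k_1,\dots,k_r)=\sum_{i=1}^n c_i\prod_{s=1}^r a_{i-k_s}$, I would apply (\ref{eqn:DMIM}) iteratively, once for each of the $r$ nested coordinates; this is exactly the single-layer argument in the proof of Lemma \ref{lem:prdmcexp} carried out $r$ times. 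Since $\varepsilon_1\in\mathcal L^\nu$, this yields a constant $C_{\nu,r}$ with
\[
\Big\|\sum_{i=1}^n c_i U_{i,r}\Big\|_\nu^2\le C_{\nu,r}\sum_{k_1>\cdots>k_r}C(k_1,\dots,k_r)^2=C_{\nu,r}\,\|\varepsilon_1\|^{-2r}\Big\|\sum_{i=1}^n c_i U_{i,r}\Big\|^2,
\]
so for every $\nu\ge2$ it suffices to estimate the $L^2$ norm and absorb $\nu$-dependent constants into $O(\cdot)$.

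For part (i), the $L^2$ norm is the Toeplitz quadratic form $\|\varepsilon_1\|^{2r}\sum_{i,i'}c_ic_{i'}\rho_r(i'-i)$, where $\rho_r(m)=\sum_{0\le j_1<\cdots<j_r}\prod_s a_{j_s}a_{j_s+m}$ is the order-$r$ covariance sequence. Bounding $|\rho_r(m)|\le (r!)^{-1}(\sum_j|a_j a_{j+m}|)^r=O(m^{-r(2\beta-1)}\ell^{2r}(m))$ and using the standard estimate $|\sum_{i,i'}c_ic_{i'}\rho_r(i'-i)|\le (\sum_i c_i^2)\sum_{|m|<n}|\rho_r(m)|$, I would invoke Karamata's theorem: because $r(2\beta-1)<1$ the sequence $\rho_r$ is \emph{not} summable and $\sum_{|m|<n}|\rho_r(m)|\asymp n^{1-r(2\beta-1)}\ell^{2r}(n)$, which gives (i).

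For part (ii), note that $T_{n,r}-\E(T_{n,r}\mid\mathcal F_{-N}^\infty)$ is precisely the part of the chaos supported on monomials whose smallest index $k_r$ lies in the deep past, $k_r\le -N-1$. Applying the same reduction, I would bound (after dropping the ordering constraint and choosing which of the $r$ coordinates is the deep one) by
\[
\|T_{n,r}-\E(T_{n,r}\mid\mathcal F_{-N}^\infty)\|_\nu^2\le C\,A_{N+1}\sum_{i,i'=1}^n |b_{i'-i}|^{\,r-1},
\]
where $b_m=\sum_j a_j a_{j+m}$ and the factor $A_{N+1}=\sum_{j\ge N+1}a_j^2\asymp N^{1-2\beta}\ell^2(N)$ arises from Cauchy--Schwarz on the truncated inner sum $\sum_{k\le -N-1}a_{i-k}a_{i'-k}$ (using $i\ge1$). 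Since $\sum_{i,i'\le n}|b_{i'-i}|^{r-1}\asymp n^{2-(r-1)(2\beta-1)}\ell^{2(r-1)}(n)$, dividing by the squared normalization $n^{2-r(2\beta-1)}\ell^{2r}(n)$ leaves a factor $\asymp (n/N)^{2\beta-1}\ell^2(N)/\ell^2(n)$. Choosing $\varphi<\beta-1/2$ makes the surplus power $(n/N)^{2\beta-1-2\varphi}$ absorb the slowly varying ratio as $n/N\to0$ (a Potter-bound argument), which is exactly where the restriction $\varphi\in(0,\beta-1/2)$ enters.

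Part (iii) is the sharp, rate-bearing refinement of (i) with $c_i\equiv1$, and I expect it to be the main obstacle. Here $\|T_{n,r}\|^2=\|\varepsilon_1\|^{2r}\sum_{|m|<n}(n-|m|)\rho_r(m)$, and under Condition \ref{cond:1} one has $a_j=c_jj^{-\beta}$ with $c_j=c+O(j^{-\phi})$, so $\ell$ is effectively constant and all regularly varying quantities acquire genuine polynomial rates. The delicate points are: (a) showing $\rho_r(m)=(r!)^{-1}b_m^r(1+O(m^{2\beta-2}))$, i.e.\ that the collision (diagonal) terms in the ordered sum are down by a power of $m$; (b) establishing $b_m=c^2 B\,m^{1-2\beta}+O(m^{1-2\beta-\phi'})$ with $B=\int_0^\infty x^{-\beta}(x+1)^{-\beta}\,dx$; and (c) replacing $\sum_{|m|<n}(n-|m|)\rho_r(m)$ by its Riemann-integral limit with an explicit rate. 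Combining (a)--(c) yields $1+O(n^{-\varphi_2})$ for a suitable $\varphi_2>0$. The final step is to identify the limiting constant $2B^r/[r!\,(1-\alpha)(2-\alpha)]$, with $\alpha=r(2\beta-1)$, with $\|Z_{r,\beta}(1)\|^2$ via the multiple Wiener--It\^o representation (\ref{eqn:MWI}); this matching is the known Taqqu/Dobrushin--Major computation, which I would cite rather than recompute the MWI variance.
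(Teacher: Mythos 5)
Your proposal is correct, and it takes a genuinely different route from the paper's. For (i), the paper decomposes $\sum_i c_iU_{i,r}$ into the double martingale array $\sum_{j=1}^n\sum_{l\ge0}\sum_{i}{\cal P}_{-ln-j+i}(c_iU_{i,r})$, applies (\ref{eqn:DMIM}) together with the single-projection bound $\|{\cal P}_0U_{n,r}\|_\nu^2=O(a_n^2A_n^{r-1})$ of Lemma \ref{lem:prdmcexp}, and then sums over $j$ by the triangle inequality; you instead iterate Burkholder once per chaos coordinate to reduce the entire $L^\nu$ problem to the $L^2$ quadratic form $\sum_{i,i'}c_ic_{i'}\rho_r(i'-i)$ and finish with $|\rho_r(m)|\le(r!)^{-1}\bigl(\sum_j|a_ja_{j+m}|\bigr)^r$ and Karamata. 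Both arguments are valid; yours makes the role of $\varepsilon_1\in{\cal L}^\nu$ transparent and avoids the $-ln-j+i$ bookkeeping, while the paper's stays inside the projection-operator framework it uses everywhere else. For (ii) the paper expands in future projections ${\cal Q}_{-j}$ and reduces to part (i), whereas you identify $T_{n,r}-\E(T_{n,r}\mid{\cal F}_{-N}^\infty)$ as the chaos components whose deepest index is $\le -N-1$, extract the factor $A_{N+1}\asymp N^{1-2\beta}\ell^2(N)$ by Cauchy--Schwarz on that coordinate, and absorb $\ell(N)/\ell(n)$ by a Potter bound --- same rate, same source of the restriction $\varphi<\beta-1/2$. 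For (iii) the paper compares $T_{n,r}$ with the pure-power process $T_{n,\beta,\ldots,\beta}$, introducing mixed exponents $\beta_k$ precisely to control the cross terms from $a_j-a_{j,\beta}=O(j^{-\beta-\phi})$, and defers the core asymptotic (\ref{eqn:D21625}) to ``elementary but tedious calculations''; you perturb at the level of the one-dimensional covariance $b_m$ instead, which is simpler, and your explicit identification $\|Z_{r,\beta}(1)\|^2=2B^r/[r!\,(1-\alpha)(2-\alpha)]$ with $\alpha=r(2\beta-1)$ and $B=\int_0^\infty x^{-\beta}(1+x)^{-\beta}dx$ (Fubini on the MWI variance) is correct and makes explicit the constant $\zeta_{\beta,\ldots,\beta}$ that the paper leaves implicit. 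The only steps you leave unexecuted --- the rate-bearing estimates (a)--(c) in (iii) --- are exactly the ones the paper also declines to write out, and your sketches of them (diagonal corrections smaller by $m^{2\beta-2}$, Riemann-sum rate for $\sum_{|m|<n}(n-|m|)\,m^{-\alpha}$) are on target, so I see no genuine gap.
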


\begin{proof}
For (i), we use the following decomposition with the help of the
projection operator
\begin{equation*}
\sum_{i=1}^{n}c_iU_{i,r}=\sum_{j=1}^{n}\sum_{l=0}^\infty
 \sum_{i=1}^{n}{\cal P}_{-ln-j+i}(c_iU_{i,r}).
\end{equation*}
Note that both $\{\sum_{i=1}^{n}{\cal P}_{-ln-j+i}(c_iU_{i,r})
\}_{l \in \mathbb{N}}$ and $\{{\cal P}_{-ln-j+i}(U_{1,r})
\}_{i=1}^n$ form martingale differences, for any $j \in
\{1,\ldots,n\}$, we have
\begin{eqnarray*}
\left\|\sum_{l=0}^\infty\sum_{i=1}^{n} {\cal P}_{-ln-j+i}(c_iU_{i,r})\right\|_\nu^2
 & \leq & C\sum_{l=0}^\infty\left\|\sum_{i=1}^{n} {\cal P}_{-ln-j+i}(c_iU_{i,r})\right\|_\nu^2 \cr
 & \leq & C\sum_{l=0}^\infty\sum_{i=1}^{n}c_i^2 \left\|{\cal P}_{0}(U_{ln+j,r})\right\|_\nu^2.
\end{eqnarray*}
Hence by Lemma \ref{lem:prdmcexp}, we have
\begin{eqnarray*}
\left\|\sum_{l=0}^\infty\sum_{i=1}^{n}
 {\cal P}_{-ln-j+i}(c_iU_{i,r})\right\|_\nu^2
 & \leq & C\sum_{i=1}^nc_i^2\left(a_j^2A_j^{r-1}
  +\sum_{l=1}^\infty a_{j+ln}^2A_{j+ln}^{r-1}\right)\\
 & = & \sum_{i=1}^n c_i^2
 O \left(j^{-2\beta-(r-1)(2\beta-1)}\ell^{2r}(j)\right).
\end{eqnarray*}
Then by the triangle inequality we can get
\begin{eqnarray*}
\left\|\sum_{i=1}^{n}c_iU_{i,r}\right\|_\nu^2
 & = & \left\|\sum_{j=1}^{n}
  \sum_{l=0}^\infty\sum_{i=1}^{n}
  {\cal P}_{-ln-j+i}(c_iU_{i,r})\right\|_\nu^2\\
 & \leq & C\sum_{i=1}^n c_i^2 \Big(\sum_{j=1}^n
  j^{-\beta-(r-1)(\beta-\frac{1}{2})}\ell^r(j)\Big)^2 \cr
  &=& O \Big(n^{1-r(2\beta-1)}\ell^{2r}(n)
  \sum_{i=1}^{n}c_i^2 \Big).
\end{eqnarray*}

For (ii), we define the future projection operator ${\cal Q}_j := \E(\cdot|{\cal F}_{j}^\infty) - \E(\cdot|{\cal F}_{j+1}^\infty)$ and obtain
\begin{equation*}
T_{n,r}-E(T_{n,r}|{\cal F}_{-N}^\infty) = \sum_{j=N+1}^\infty {\cal Q}_{-j}(T_{n,r}).
\end{equation*}
Note that ${\cal Q}_j (T_{n,r}) = \sum_{i=1}^{n} a_{i-j}
\varepsilon_j \E(U_{i,r-1}|{\cal F}_{j+1}^\infty)$ which forms a
sequence of martingale differences for $j \in \mathbb{Z}$, we have
\begin{eqnarray*}
\left\|T_{n,r}-\E(T_{n,r}|{\cal F}_{-N}^\infty)\right\|_\nu^2
 &\leq& C \sum_{j=N}^\infty \left\|a_{i+j}\varepsilon_{-j}
  \E(U_{i,r-1}|{\cal F}_{-j+1}^\infty)\right\|_{\nu}^2 \cr
 &\leq& C\sum_{j=N}^\infty \left\|a_{i+j} U_{i,r-1}\right\|_{\nu}^2.
\end{eqnarray*}
Hence by using part (i) of this lemma, we have
\begin{eqnarray*}
\left\|T_{n,r}-\E(T_{n,r}|{\cal F}_{-N}^\infty)\right\|_\nu^2
 & \leq & C \sum_{j=N}^\infty \sum_{i=1}^{n}
  a^2_{i+j}n^{1-(r-1)(2\beta-1)}\ell^{2(r-1)}(n)\\
 & \leq & C N^{-(2\beta-1)}\ell^2(N)
 n^{2-(r-1)(2\beta-1)} \ell^{2(r-1)}(n).
\end{eqnarray*}
Therefore by the the slow variation of  $\ell(\cdot)$, we have for
some $\varphi \in (0, \beta - 1/2)$,
\begin{equation*}
 { {\|T_{n, r} - \E(T_{n, r} | {\cal F}_{-N}^\infty) \|_\nu}
  \over {n^{1- r(\beta-1/2)} \ell^r(n)}}
  = O \left(\frac{n^{\beta-\frac{1}{2}}\ell(N)}
  {N^{\beta-\frac{1}{2}}\ell(n)}\right)
  = O ((n/N)^\varphi).
\end{equation*}

We now prove (iii). Without loss of generality let the constant
$c$ in Condition \ref{cond:1} be $1$ and assume $\| \varepsilon_1
\| = 1$. For $\beta \in (1/2, \, 1/2 + 1/(2r))$, define $a_{i,
\beta} = i^{-\beta}$ if $i \ge 1$, $a_{i, \beta} = 1$ if $i = 0$
and $a_{i, \beta} = 0$ if $i < 0$. Let $\beta_k \in (1/2, \, 1/2 +
1/(2r))$, $1\le k \le r$, and define
\begin{equation*}
 T_{n, \beta_1, \ldots, \beta_r} =
  \sum_{j_r < \ldots < j_1 \le n}
  \sum_{i=1}^n \prod_{k=1}^r
  a_{i-j_k, \beta_k} \varepsilon_{j_k}.
\end{equation*}
Using the approximations that, for $1/2 < \beta < 1$,
$\sum_{i=1}^n i^{-\beta} = n^{1-\beta} / (1-\beta) + O(1)$ and
$\sum_{i=n_1}^{n_2} i^{-\beta} = (n_2^{1-\beta} - n_1^{1-\beta}) /
(1-\beta) + O(n_2^{-\beta} + n_1^{-\beta})$ when $n_1, n_2 \ge 1$,
by elementary but tedious calculations, we have for some
$\varphi_3 > 0$ that
\begin{eqnarray}
 \label{eqn:D21625}
 { {\|T_{n, \beta_1, \ldots, \beta_r} \|^2}
 \over {n^{2- \sum_{k=1}^r(2\beta_k -1)}
  \zeta_{\beta_1, \ldots, \beta_r} }  }
 = 1 + O( n^{-\varphi_3}),
\end{eqnarray}
where, recall that ${\cal S}_t = \{(u_1, \ldots, u_r) \in \R^r: \,
-\infty < u_1 < \ldots <u_r <t\}$,
\begin{equation*}
\zeta_{\beta_1, \ldots, \beta_r} = \int_{{\cal S}_1} \left[\int_0^1 \prod_{k=1}^r g_{\beta_k}(v-u_k) d v \right]^2 d u_1 \ldots d u_r.
\end{equation*}
Note that $\|Z_{r,\beta}(1)\|^2 = \zeta_{\beta, \ldots, \beta}$ if
$1/2 < \beta < 1/2 + 1/(2 r)$. We now show that (\ref{eqn:D21625})
implies (\ref{eqn:S295381}). To this end, for notational clarity,
we only consider $r = 2$. The general case similarly follows. Let
$\phi > 0$ be such that $\phi < 1/2 + 1/(2 r) - \beta$, hence
$\phi + \beta < 1/2 + 1/(2 r)$. Writing
\begin{eqnarray}\label{eqn:D21715}
 a_{i-j_1} a_{i-j_2} - a_{i-j_1, \beta} a_{i-j_2, \beta}
  = (a_{i-j_1} - a_{i-j_1, \beta} )a_{i-j_2}
  + a_{i-j_1, \beta} (a_{i-j_2} - a_{i-j_2, \beta}).
\end{eqnarray}
By Condition \ref{cond:1}, for $j_1 \ge 1$, $a_{j_1} - a_{j_1,
\beta} = O(j_1^{-\beta-\phi})$. Hence $a_{j} - a_{j, \beta} =
O(a_{j, \beta+\phi})$ for $j \in \Z$. Applying (\ref{eqn:D21625})
to the case with $\beta_1 = \beta$ and $\beta_2 = \phi + \beta$,
we have
\begin{eqnarray*}
\left\| \sum_{j_2 < j_1 \le n}
  \sum_{i=1}^n (a_{i-j_1} - a_{i-j_1, \beta} )a_{i-j_2}
  \varepsilon_{j_1} \varepsilon_{j_2} \right\|^2
  &=& \sum_{j_2 < j_1 \le n}
  \left[\sum_{i=1}^n (a_{i-j_1} - a_{i-j_1, \beta} )a_{i-j_2}
  \right]^2\cr
  &=& O(1)
  \left\| \sum_{j_2 < j_1 \le n}
  \sum_{i=1}^n a_{i-j_1, \beta_2} a_{i-j_2, \beta_1}
  \varepsilon_{j_1} \varepsilon_{j_2} \right\|^2 \cr
  &=& O(n^{2-(2\beta_1-1)-(2\beta_2-1)}).
\end{eqnarray*}
A similar relation can be obtained by replacing $(a_{i-j_1} -
a_{i-j_1, \beta} )a_{i-j_2}$ in the preceding equation by
$a_{i-j_1, \beta} (a_{i-j_2} - a_{i-j_2, \beta})$. Hence, by
(\ref{eqn:D21715}), $ \|T_{n, 2} - T_{n, \beta, \beta}\|^2 =
O(n^{2-(2\beta_1-1)-(2\beta_2-1)}) $, which by (\ref{eqn:D21625})
implies (\ref{eqn:S295381}) since $\beta_1 = \beta$ and $\beta_2 =
\phi + \beta$.
\end{proof}

\begin{lemma}
\label{lem:S6511} Assume Condition \ref{cond:2} holds with $\nu
\ge 2$ and $K$ has power rank $p \ge 1$ with respect to the
distribution of $X_i$ such that $r (2\beta-1) < 1$. Then we have: (i) $\|S_n \|_\nu = O(\sigma_{n, p})$; and (ii) there exists
$\varphi_1, \varphi_2 > 0$ such that
\begin{equation*}
{\|S_n - \E(S_n | {\cal F}_{-N}^\infty) \|_\nu \over \sigma_{n,p}} = O(n^{-\varphi_1}) + O[(n/N)^{\varphi_2}].
\end{equation*}
\end{lemma}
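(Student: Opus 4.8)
The plan is to isolate the leading Volterra term and to show that the remainder is negligible at a polynomial rate. Since $K$ has power rank $p$, we have $\kappa_0=K_\infty(0)=\E K(X_i)=\mu$ and $\kappa_r=0$ for $1\le r\le p-1$, so the $p$-th order Volterra expansion (\ref{eqn:lq110}) collapses to $K(X_i)-\mu=\kappa_p U_{i,p}+L_{i,p}$, whence
\[
 S_n=\kappa_p T_{n,p}+R_n,\qquad T_{n,p}=\sum_{i=1}^n U_{i,p},\quad R_n=\sum_{i=1}^n L_{i,p}.
\]
I would show that $\kappa_p T_{n,p}$ has the exact order $\sigma_{n,p}$ while $\|R_n\|_\nu=O(n^{-\varphi_1}\sigma_{n,p})$ for some $\varphi_1>0$; call this last estimate $(\star)$. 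Both parts of the lemma then follow quickly.

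For the leading term, Lemma \ref{lem:S096311}(i) with $c_i\equiv1$ gives $\|\kappa_p T_{n,p}\|_\nu^2=O(n^{2-p(2\beta-1)}\ell^{2p}(n))=O(\sigma_{n,p}^2)$, since $2H=2-p(2\beta-1)$ by (\ref{eqn:sgmanp}). Granting $(\star)$, part (i) is immediate from the triangle inequality. For part (ii) I would split along the same decomposition,
\[
 S_n-\E(S_n|{\cal F}_{-N}^\infty)=\kappa_p\big(T_{n,p}-\E(T_{n,p}|{\cal F}_{-N}^\infty)\big)+\big(R_n-\E(R_n|{\cal F}_{-N}^\infty)\big).
\]
The first summand is controlled directly by Lemma \ref{lem:S096311}(ii) (and trivially when $N<n$), giving $\|\kappa_p(T_{n,p}-\E(T_{n,p}|{\cal F}_{-N}^\infty))\|_\nu=O((n/N)^{\varphi_2}\sigma_{n,p})$ for any $\varphi_2\in(0,\beta-1/2)$; the second is bounded crudely by $\|R_n-\E(R_n|{\cal F}_{-N}^\infty)\|_\nu\le2\|R_n\|_\nu=O(n^{-\varphi_1}\sigma_{n,p})$ via $(\star)$, since conditional expectation is an ${\cal L}^\nu$-contraction. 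Dividing by $\sigma_{n,p}$ yields (ii).

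Everything thus reduces to $(\star)$, and the main obstacle is the predictive dependence estimate
\[
 \|{\cal P}_0 L_{m,p}\|_\nu=O\big(a_m A_m^{p/2}\big),\qquad A_m=\sum_{j\ge m}a_j^2 .
\]
To obtain it I would represent ${\cal P}_0 L_{m,p}$ as the increment of $L_{m,p}$ under replacing $\varepsilon_0$ by an independent copy, splitting $X_m=V_m+a_m\varepsilon_0$ with $V_m$ free of $\varepsilon_0$, and Taylor-expand $K_{m-1}(\cdot)$ to order $p+1$. The Volterra terms $\kappa_r U_{m,r}$ are tailored precisely to annihilate the first $p$ Taylor coefficients, so the surviving contribution carries the single innovation $\varepsilon_0$ (the factor $a_m$) together with $p$ extra innovations from $V_m$ (the factor $A_m^{p/2}$), the $(p+1)$-st derivative being controlled in ${\cal L}^\nu$ by exactly the quantities in Condition \ref{cond:2} --- the $\alpha=p+1$ term $\|K_{m-1}^{(p+1)}(X_{m,0};\lambda)\|_\nu=O(1)$ and its $\varepsilon_1$-weighted companions, with the $2\nu$ moments on $\varepsilon_1$ ensuring finiteness of the relevant products (any lower-order contributions are handled identically and are negligible). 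The displayed bound matches $\|{\cal P}_0 U_{m,p+1}\|_\nu=O(a_m A_m^{p/2})$ from Lemma \ref{lem:prdmcexp}: the remainder forgets the past one factor $A_m^{1/2}$ faster than $U_{m,p}$ does.

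Granting this, I would close $(\star)$ by the projection/Burkholder computation already used for Lemmas \ref{lem:prdmcexp} and \ref{lem:S096311}(i): writing ${\cal P}_k R_n=\sum_i {\cal P}_k L_{i,p}$ with $\|{\cal P}_k L_{i,p}\|_\nu=\|{\cal P}_0 L_{i-k,p}\|_\nu$, inequality (\ref{eqn:DMIM}) reduces $\|R_n\|_\nu^2$ to summing $b_m:=\|{\cal P}_0 L_{m,p}\|_\nu=O(m^{-\delta}\ell^{p+1}(m))$ with $\delta=\beta(p+1)-p/2>1/2$ (the strict inequality using $\beta>1/2$). Evaluating these sums gives $\|R_n\|_\nu=O(\sqrt n)$ when $\delta>1$ and $\|R_n\|_\nu=O(n^{3/2-\delta}\ell^{p+1}(n))$ when $1/2<\delta\le1$; comparing with $\sigma_{n,p}\asymp n^{1-p(\beta-1/2)}\ell^p(n)$ and invoking $p(2\beta-1)<1$, both regimes yield $(\star)$ with $\varphi_1=\min\{(1-p(2\beta-1))/2,\ \beta-1/2\}$, up to an arbitrarily small loss absorbing the slowly varying factor. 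The genuinely hard step is the Taylor/derivative estimate for $\|{\cal P}_0 L_{m,p}\|_\nu$; once it is in hand, the rest is bookkeeping with tools already in the Appendix.
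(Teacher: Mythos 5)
Your proposal is correct and follows essentially the same route as the paper: the same decomposition $S_n=\kappa_p T_{n,p}+\sum_{i\le n}L_{i,p}$, Lemma \ref{lem:S096311}(i)--(ii) for the leading Volterra term, and the projection/Burkholder computation driven by the bound $\|{\cal P}_0 L_{m,p}\|_\nu=O(|a_m|A_m^{p/2})$ for the remainder (the paper's version of this bound, $\|{\cal P}_0 L_{m,p}\|^2_\nu=a_m^2O(a_m^2+A_{m+1}(4)+A_{m+1}^p)$, has two extra terms that are dominated by yours in the regime $p(2\beta-1)<1$). The one step you flag as hard --- the Taylor-expansion estimate for $\|{\cal P}_0 L_{m,p}\|_\nu$ --- is exactly the step the paper also does not prove but imports from Theorem 5 of Wu (2006), and your explicit evaluation of the resulting sums reproduces the paper's two cases $(p+1)(2\beta-1)\lessgtr 1$ with the same exponents.
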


\begin{proof}
Recall Lemma \ref{lem:S096311} for $T_{n, p}$.
Observe that $Y_n = L_{n, p} + \kappa_p U_{n, p}$ and $S_n =
S_n(L^{(p)}) + \kappa_p T_{n, p}$. Since
\begin{eqnarray*}
\|S_n - \E(S_n | {\cal F}_{-N}^\infty) \|_\nu
 &\le& \|S_n(L^{(p)})
  - \E(S_n(L^{(p)}) | {\cal F}_{-N}^\infty) \|_\nu + \|\kappa_p T_{n, p}
 - \E(\kappa_p T_{n, p} | {\cal F}_{-N}^\infty) \|_\nu \cr
 &\le& 2 \|S_n(L^{(p)}) \|_\nu
 + |\kappa_p|\|T_{n, p}
 - \E(T_{n, p} | {\cal F}_{-N}^\infty) \|_\nu,
\end{eqnarray*}
by Lemma \ref{lem:S096311}, it suffices to show that
\begin{eqnarray}\label{eqn:S7311}
  \frac{\|S_n(L^{(p)}) \|_\nu}{\sigma_{n,p}} = O(n^{-\varphi_1}).
\end{eqnarray}
By the argument of Theorem 5 in Wu (2006), Condition \ref{cond:2}
with $\nu \ge 2$ implies that
\begin{eqnarray}
\label{eqn:54401}
 \|{\cal P}_0 L_{n, p} \|^2_\nu
  = a_n^2 O(a_n^2 + A_{n+1}(4) + A_{n+1}^p),
\end{eqnarray}
where $A_n(4) = \sum_{t=n}^\infty a_t^4$ and $A_n =
\sum_{t=n}^\infty a_t^2$. Let $\theta_i = |a_i| [ |a_i| +
A^{1/2}_{i+1}(4) + A_{i+1}^{p/2} ]$ if $i \ge 0$ and $\theta_i =
0$ if $i < 0$ (Theorem 5 and Lemma 2 in Wu consider only the case
$\nu=2$, but the case $\nu>2$ can be proved analogously using the
Burkholder inequality). Write $\Theta_n = \sum_{k=0}^n \theta_k$
and $\Xi_{n, p} = n \Theta_{n}^2 + \sum_{i=1}^\infty (\Theta_{n+i}
- \Theta_i)^2$. By (\ref{eqn:DMIM}), since ${\cal P}_k
S_n(L^{(p)})$, $k = -\infty, \ldots, n-1, n$, for martingale
differences, we have
\begin{eqnarray*}
\|S_n(L^{(p)}) \|^2_\nu
 &\leq& C_\nu \sum_{k=-\infty}^n \|{\cal P}_k S_n(L^{(2)}) \|^2_\nu \cr
 &\leq& C_\nu \sum_{k=-\infty}^n \left( \sum_{i=1}^n \theta_{i-k} \right)^2\cr
 &=& O(\Xi_{n, p}).
\end{eqnarray*}
By (i), (ii) and (iii) of Corollary 1 in Wu (2006), we have
$\Xi^{1/2}_{n, p} / \sigma_{n, p} = O(n^{1/2-\beta} \ell(n))$ if
$(p+1) (2\beta-1) < 1$ and $\Xi^{1/2}_{n, p} / \sigma_{n, p} =
O(n^{p(\beta-1/2)-1/2} \ell_0(n))$ if $(p+1) (2\beta-1) \ge 1$.
Here $\ell_0$ is a slowly varying function. Note that both $1/2 -
\beta$ and ${p(\beta-1/2)-1/2}$ are negative, (\ref{eqn:S7311}) follows.
\end{proof}

\section*{References}
\par\noindent\hangindent2.3em\hangafter 1
Avram, F. and Taqqu, M.~S. (1987) Noncentral limit
theorems and appell polynomials. {\it Ann. Prob.} {\bf 15},
767--775.

\par\noindent\hangindent2.3em\hangafter 1
Bertail, P., Politis, D.~N. and Romano, J.~P. (1999)
On subsampling estimators with unknown rate of convergence. {\it
J. Amer. Statist. Assoc.} {\bf 94}, 569--579.

\par\noindent\hangindent2.3em\hangafter 1
Billingsley, P. (1968) Convergence of Probability measures, Wiley,
New York.

\par\noindent\hangindent2.3em\hangafter 1
Bingham, N.~H., Goldie, C.~M., and Teugels, J.~M. (1987) Regular
variation, Cambridge University Press, Cambridge, UK.

\par\noindent\hangindent2.3em\hangafter 1
Chow, Y.~S. and Teicher, H. (1997) {\it Probability theory.
Independence, interchangeability, martingales.} Third edition.
Springer, New York.

\par\noindent\hangindent2.3em\hangafter 1
Chung, C.~F. (2002) Sample means, sample autocovariances, and
linear regression of stationary multivariate long memory
processes. {\it Econom. Theory} {\bf 18}, 51--78.

\par\noindent\hangindent2.3em\hangafter 1
Davydov, Y.~A. (1970) The invariance principle for stationary
processes, {\it Theory of Probability and its Applications} {\bf
15}, 487--498.

\par\noindent\hangindent2.3em\hangafter 1
Dittmann, I. and Granger, C.~W.~J. (2002) Properties of nonlinear
transformations of fractionally integrated processes. {\it Journal
of Econometrics} {\bf 110}, 113--133.

\par\noindent\hangindent2.3em\hangafter 1
Dobrushin, R.~L. and Major, P. (1979) Non-central limit theorems
for nonlinear functionals of Gaussian fields. {\it Z. Wahrsch.
Verw. Gebiete} {\bf 50}, 27--52.

\par\noindent\hangindent2.3em\hangafter 1
Giraitis, L., Robinson, P.~M. and Surgailis, D. (1999)
Variance-type estimation of long memory. {\it Stochastic processes
and their applications} {\bf 80}, 1--24.

\par\noindent\hangindent2.3em\hangafter 1
Granger, C.~W.~J. and Joyeux, R. (1980) An introduction to long-memory time series models and fractional differencing. {\it J.
Time Ser. Anal.} {\bf 1}, 15--29.

\par\noindent\hangindent2.3em\hangafter 1
Hall, P., Horowitz, J.~L. and Jing, B.-Y. (1995) On blocking
rules for the bootstrap with dependent data. {\it Biometrika} {\bf
82}, 561--574.

\par\noindent\hangindent2.3em\hangafter 1
Hall, P., Jing, B.-Y. and Lahiri, S.~N. (1998) On the
sampling window method for long-range dependent data. {\it
Statist. Sinica} {\bf 8}, 1189--1204.

\par\noindent\hangindent2.3em\hangafter 1
Ho, H.-C. and Hsing, T. (1997). Limit theorems for functionals of
moving averages, {\it Annals of Probability} {\bf 25}, 1636--1669.

\par\noindent\hangindent2.3em\hangafter 1
Hosking, R.~J. (1981) Fractional differencing. {\it Biometrika} {\bf 68}, 165--176.

\par\noindent\hangindent2.3em\hangafter 1
Hurvich, C.~M., Moulines, E. and Soulier, P. (2005) Estimating
long memory in volatility. {\it Econometrica} {\bf 73},
1283--1328.

\par\noindent\hangindent2.3em\hangafter 1
K\"unsch, H.~R. (1989) The jackknife and the bootstrap for
general stationary observations. {\it Ann. Statist.} {\bf 17},
1217--1241.

\par\noindent\hangindent2.3em\hangafter 1
Lahiri, S.~N. (1993) On the moving block bootstrap under long
range dependence. {\it Statist. Probab. Lett.} {\bf 18}, 405--413.

\par\noindent\hangindent2.3em\hangafter 1
Moulines, E. and Soulier, P. (1999) Broad-band log-periodogram
regression of time series with long-range dependence. {\it Ann.
Statist.} {\bf 27}, 1415--1439.

\par\noindent\hangindent2.3em\hangafter 1
Nordman, D.~J. and Lahiri, S.~N. (2005) Validity of the
sampling window method for long-range dependent linear processes.
{\it Econometric Theory} {\bf 21}, 1087--1111.

\par\noindent\hangindent2.3em\hangafter 1
Politis, D.~N., Romano, J.~P. and Wolf, M. (1999) {\it
Subsampling}. Springer, New York.

\par\noindent\hangindent2.3em\hangafter 1
Robinson, P.~M. (1994) Semiparametric analysis of long-memory time
series. {\it Ann. Statist.} {\bf 22}, 515--539.

\par\noindent\hangindent2.3em\hangafter 1
Robinson, P.~M. (1995a) Log-periodogram regression of time series
with long range dependence. {\it Ann. Statist.} {\bf 23},
1048--1072.

\par\noindent\hangindent2.3em\hangafter 1
Robinson, P.~M. (1995b) Gaussian semiparametric estimation of long
range dependence. {\it Ann. Statist.} {\bf 23}, 1630--1661.

\par\noindent\hangindent2.3em\hangafter 1
Surgailis, D. (1982) Domains of attraction of self-similar
multiple integrals. {\it Litovsk. Mat. Sb.} {\bf 22}, 185--201.

\par\noindent\hangindent2.3em\hangafter 1
Taqqu, M.~S. (1975) Weak convergence to fractional Brownian
motion and to the Rosenblatt process. {\it Z. Wahrsch. Verw.
Gebiete} {\bf 31}, 53-83.

\par\noindent\hangindent2.3em\hangafter 1
Teverovsky, V. and Taqqu, M.~S. (1997) Testing for long-range
dependence in the presence of shifting mean or a slowly declining
trend, using a variance-type estimator. {\it J. Time Ser. Anal.}
{\bf 18}, 279--304.

\par\noindent\hangindent2.3em\hangafter 1
Wu, W.~B. (2005) Nonlinear system theory: Another look at
dependence, {\it Proc. Nat. Journal Acad. Sci. USA} {\bf 102},
14150--14154.

\par\noindent\hangindent2.3em\hangafter 1
Wu, W.~B. (2006) Unit root testing for functionals of linear
processes. {\it Econometric Theory} {\bf 22}, 1--14

\par\noindent\hangindent2.3em\hangafter 1
Wu, W.~B. (2007) Strong invariance principles for dependent
random variables. {\it Ann. Probab.} {\bf 35}, 2294--2320.
\end{document}